\newtheorem{prop}{Proposition}[section]
\newtheorem{thm}[prop]{Theorem}
\newtheorem{lem}[prop]{Lemma}
\newtheorem{cor}[prop]{Corollary}
\theoremstyle{definition}
\newtheorem{defn}[prop]{Definition}
\newtheorem{ex}[prop]{Example}
\newtheorem{rem}[prop]{Remark}
\newtheorem*{ack}{Acknowledgements}
\def\co{\colon\thinspace}
\newcommand{\alphac}{\alpha_{\mathrm{c}}}
\newcommand{\C}{\mathbb{C}}
\newcommand{\CP}{\mathbb{C}\mathrm{P}}
\newcommand{\charX}{\mathrm{char}_X}
\newcommand{\rmd}{\mathrm{d}}
\newcommand{\dB}{\rmd_{\mathrm{B}}}
\newcommand{\rme}{\mathrm{e}}
\newcommand{\FF}{\mathcal{F}}
\newcommand{\HB}{H_{\mathrm{B}}}
\newcommand{\rmi}{\mathrm{i}}
\newcommand{\ind}{\mathrm{ind}}
\newcommand{\N}{\mathbb{N}}
\newcommand{\OmegaB}{\Omega_{\mathrm{B}}}
\newcommand{\omegac}{\omega_{\mathrm{c}}}
\newcommand{\tphi}{\tilde{\varphi}}
\newcommand{\tpi}{\tilde{\pi}}
\newcommand{\R}{\mathbb{R}}
\newcommand{\TT}{\mathcal{T}}
\newcommand{\ttheta}{\tilde{\theta}}
\newcommand{\volX}{\mathrm{vol}_X}
\newcommand{\Z}{\mathbb{Z}}
\DeclareMathOperator{\Int}{Int}
\begin{document}

\author{Hansj\"org Geiges}
\address{Mathematisches Institut, Universit\"at zu K\"oln,
Weyertal 86--90, 50931 K\"oln, Germany}
\email{geiges@math.uni-koeln.de}

\thanks{This work is part of a project in the SFB/TRR 191
`Symplectic Structures in Geometry, Algebra and Dynamics',
funded by the DFG}

\title{What does a vector field know
about volume?}

\date{}

\begin{abstract}
This note provides an affirmative answer to a question of Viterbo concerning
the existence of nondiffeomorphic contact forms that share the same Reeb
vector field. Starting from an observation by Croke--Kleiner and Abbondandolo
that such contact forms define the same total volume, we discuss
various related issues for the wider class of geodesible vector fields.
In particular, we define an Euler class of a geodesible vector field
in the associated basic cohomology and give a topological
characterisation of vector fields with vanishing Euler class.
We prove the theorems of Gau{\ss}--Bonnet and Poincar\'e--Hopf
for closed, oriented $2$-dimensional orbifolds
using global surfaces of section and the volume determined
by a geodesible vector field. This volume is computed for Seifert fibred
$3$-manifolds and for some transversely holomorphic flows.
\end{abstract}


\subjclass[2010]{57R30; 37C10, 53C22, 53D35, 57R25, 58A10}

\maketitle


\section{Introduction}
This paper is concerned with a question about Reeb flows posed to me
by Claude Viterbo: are there nondiffeomorphic contact forms with the
same Reeb vector field? Viterbo's question was prompted by
Alberto Abbondandolo's discovery of a miraculous identity on differential
forms.

\begin{lem}[Abbondandolo]
Given two differential $1$-forms $\alpha,\beta$ on the same manifold,
the identity
\begin{eqnarray}
\label{eqn:alberto}
\lefteqn{\alpha\wedge(\rmd\alpha)^n-\beta\wedge(\rmd\beta)^n=} \\
 & & (\alpha-\beta)\wedge\sum_{j=0}^n(\rmd\alpha)^j\wedge(\rmd\beta)^{n-j}+
      \rmd\bigl(\alpha\wedge\beta\wedge\sum_{j=1}^{n-1}
      (\rmd\alpha)^j\wedge(\rmd\beta)^{n-1-j}\bigr)\nonumber
\end{eqnarray}
holds for any $n\in\N_0$.
\end{lem}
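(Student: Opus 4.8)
The plan is to establish \eqref{eqn:alberto} by one telescoping step followed by repeated ``integration by parts'', using throughout only that $\rmd\alpha$ and $\rmd\beta$ are \emph{closed} forms of \emph{even} degree. Concretely this yields three facts: (i)~$\rmd\alpha$ and $\rmd\beta$ commute with every form; (ii)~$\rmd\bigl((\rmd\alpha)^j\wedge(\rmd\beta)^k\wedge\eta\bigr)=(\rmd\alpha)^j\wedge(\rmd\beta)^k\wedge\rmd\eta$ for any form $\eta$; and (iii)~for a $1$-form $\lambda$ and any form $\eta$, $\lambda\wedge\rmd\eta=\rmd\lambda\wedge\eta-\rmd(\lambda\wedge\eta)$. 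The case $n=0$ is trivial, so assume $n\ge 1$.

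First I would split
\[
\alpha\wedge(\rmd\alpha)^n-\beta\wedge(\rmd\beta)^n=(\alpha-\beta)\wedge(\rmd\alpha)^n+\beta\wedge\bigl((\rmd\alpha)^n-(\rmd\beta)^n\bigr),
\]
factor the difference of powers by (i) as $(\rmd\alpha)^n-(\rmd\beta)^n=\rmd(\alpha-\beta)\wedge\Sigma$ with $\Sigma:=\sum_{j=0}^{n-1}(\rmd\alpha)^j\wedge(\rmd\beta)^{n-1-j}$, and note by (ii) that $\rmd(\alpha-\beta)\wedge\Sigma=\rmd\bigl((\alpha-\beta)\wedge\Sigma\bigr)$ since $\Sigma$ is closed. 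Applying (iii) with $\lambda=\beta$ and $\eta=(\alpha-\beta)\wedge\Sigma$ then gives
\[
\beta\wedge\bigl((\rmd\alpha)^n-(\rmd\beta)^n\bigr)=\rmd\beta\wedge(\alpha-\beta)\wedge\Sigma-\rmd\bigl(\beta\wedge(\alpha-\beta)\wedge\Sigma\bigr).
\]
In the first term on the right, (i) gives $\rmd\beta\wedge(\alpha-\beta)\wedge\Sigma=(\alpha-\beta)\wedge(\rmd\beta\wedge\Sigma)$ and, after reindexing, $\rmd\beta\wedge\Sigma=\sum_{j=0}^{n-1}(\rmd\alpha)^j\wedge(\rmd\beta)^{n-j}$; adding the leftover summand $(\alpha-\beta)\wedge(\rmd\alpha)^n$ supplies the missing $j=n$ term and produces $(\alpha-\beta)\wedge\sum_{j=0}^{n}(\rmd\alpha)^j\wedge(\rmd\beta)^{n-j}$. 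In the second term, $\beta\wedge(\alpha-\beta)=\beta\wedge\alpha=-\alpha\wedge\beta$, so $-\rmd\bigl(\beta\wedge(\alpha-\beta)\wedge\Sigma\bigr)=\rmd\bigl(\alpha\wedge\beta\wedge\Sigma\bigr)$, which is the asserted exact term.

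I expect no genuine conceptual obstacle here — the whole content is that $\rmd\alpha$ and $\rmd\beta$ are closed and of even degree — so the one point that needs care is the index bookkeeping: getting the summation ranges in $\Sigma$ and in $\rmd\beta\wedge\Sigma$ right, and correctly absorbing the term $(\alpha-\beta)\wedge(\rmd\alpha)^n$. It is worth checking $n=1$ and $n=2$ by hand to pin down the conventions; for $n=1$, for instance, the correction term is simply $\rmd(\alpha\wedge\beta)$. (An alternative is induction on $n$: multiply the identity for $n$ on the right by $\rmd\alpha$ and use (ii) and (iii) once more to rewrite $\beta\wedge(\rmd\beta)^n\wedge\rmd\alpha$ as $\beta\wedge(\rmd\beta)^{n+1}$ plus an exact term, after which everything reassembles into the identity for $n+1$; this is the present computation in another guise.)
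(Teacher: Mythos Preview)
Your verification is correct and is precisely the direct computation the paper has in mind; the paper offers no proof beyond declaring the verification ``straightforward''. One point worth flagging: your $\Sigma=\sum_{j=0}^{n-1}(\rmd\alpha)^j\wedge(\rmd\beta)^{n-1-j}$ starts at $j=0$, whereas the identity as printed in the paper has the second sum running from $j=1$ to $n-1$; your version is the correct one (your $n=1$ check already shows the exact term must be $\rmd(\alpha\wedge\beta)$, not zero), so what you have actually done is both prove the lemma and silently correct a typo in its statement.
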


Identity (\ref{eqn:alberto}), whose verification is straightforward, has the
following striking consequence, which --- as we learned in the
meantime --- has been
observed earlier by Croke and Kleiner~\cite[Lemma~2.1]{crkl94}. They do not
state identity~(\ref{eqn:alberto}), but give a quite similar proof.

\begin{prop}[Croke--Kleiner]
\label{prop:volume}
Let $X$ be a nonsingular vector field on a closed, oriented manifold $M$ of
dimension $2n+1$. Let $\alpha,\beta$ be $1$-forms on $M$ that are
invariant under the flow of $X$ and satisfy
\begin{equation}
\label{eqn:normalise}
\alpha(X)=\beta(X)=1.
\end{equation}
Then
\begin{equation}
\label{eqn:volume}
\int_M\alpha\wedge(\rmd\alpha)^n=\int_M\beta\wedge(\rmd\beta)^n.
\end{equation}
\end{prop}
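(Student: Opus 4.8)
The plan is to integrate Abbondandolo's identity~(\ref{eqn:alberto}) over $M$. Because $M$ is closed, Stokes's theorem kills the exact term on the right-hand side, so it suffices to show
\begin{equation*}
\int_M(\alpha-\beta)\wedge\sum_{j=0}^n(\rmd\alpha)^j\wedge(\rmd\beta)^{n-j}=0,
\end{equation*}
and I would in fact prove the stronger assertion that the integrand vanishes identically on $M$.

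To that end, first record the consequences of the hypotheses. Write $\mathcal{L}_X$ for the Lie derivative and $\iota_X$ for the interior product with $X$. Invariance of $\alpha$ under the flow means $\mathcal{L}_X\alpha=0$, while $\alpha(X)=1$ is constant, so $\rmd(\iota_X\alpha)=0$; Cartan's formula $\mathcal{L}_X=\rmd\iota_X+\iota_X\rmd$ then gives $\iota_X\rmd\alpha=0$, and likewise $\iota_X\rmd\beta=0$. Moreover $\iota_X(\alpha-\beta)=1-1=0$. By the Leibniz rule for $\iota_X$ it follows that the $(2n+1)$-form
\begin{equation*}
\eta:=(\alpha-\beta)\wedge\sum_{j=0}^n(\rmd\alpha)^j\wedge(\rmd\beta)^{n-j}
\end{equation*}
satisfies $\iota_X\eta=0$. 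Now the key point: a differential form of top degree $2n+1$ on the $(2n+1)$-manifold $M$ that is annihilated by the nowhere-zero vector field $X$ must vanish. Indeed, at any point $p$ complete $X_p$ to a basis $X_p,v_2,\dots,v_{2n+1}$ of $T_pM$; then $\eta_p(X_p,v_2,\dots,v_{2n+1})=(\iota_X\eta)_p(v_2,\dots,v_{2n+1})=0$, and since $\eta_p$ is alternating this forces $\eta_p=0$. Hence $\eta\equiv 0$, its integral is zero, and~(\ref{eqn:volume}) follows.

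There is no real obstacle here beyond two small observations: that the normalisation $\alpha(X)=1$ together with flow-invariance forces $\iota_X\rmd\alpha=0$ (so that the entire nonexact correction term in~(\ref{eqn:alberto}) lies in the kernel of $\iota_X$), and that a nonvanishing vector field trivialises the top exterior power and thereby detects the pointwise vanishing of $\eta$. Everything else is bookkeeping with the Leibniz rule and Stokes's theorem; compactness and orientability of $M$ are needed only to make sense of the integrals in~(\ref{eqn:volume}) and to discard the boundary term, not for the pointwise identity $\eta\equiv 0$.
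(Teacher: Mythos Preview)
Your proof is correct and follows essentially the same approach as the paper: derive $\iota_X\rmd\alpha=\iota_X\rmd\beta=0$ from the hypotheses via Cartan's formula, then combine identity~(\ref{eqn:alberto}) with Stokes's theorem. The paper simply declares the conclusion ``immediate'' at that point, whereas you spell out the pointwise vanishing of the nonexact term as a top-degree form annihilated by the nowhere-zero vector field~$X$; this is exactly the step the paper leaves implicit.
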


\begin{proof}
Given (\ref{eqn:normalise}), the invariance condition
$L_X\alpha=L_X\beta=0$ is equivalent to
\begin{equation}
\label{eqn:iXdalpha}
i_X\rmd\alpha=i_X\rmd\beta=0
\end{equation}
by the Cartan formula. Then (\ref{eqn:volume})
is immediate from~(\ref{eqn:alberto}) and Stokes's theorem.
\end{proof}

In particular, this proposition says that any two contact forms
on a closed, oriented manifold that share the same Reeb vector field give rise
to volume forms that integrate to the same total volume. In other
words, this total volume is determined by the Reeb vector field
alone. Abbondandolo has raised the question whether one can compute this 
volume from a given Reeb vector field, not knowing a contact
form it is associated with.

\begin{rem}
Croke and Kleiner used this proposition to conclude that
two compact Riemannian manifolds with $C^1$-conjugate geodesic flows
have the same volume~\cite[Proposition~1.2]{crkl94}. This follows
by considering the canonical contact form on the unit cotangent
bundle, whose Reeb vector field generates the cogeodesic
flow~\cite[Theorem~1.5.2]{geig08}.
\end{rem}

As we shall see, the existence of a $1$-form $\alpha$ as in
Proposition~\ref{prop:volume} is equivalent to the vector field
$X$ being geodesible (Definition~\ref{defn:geodesible},
Proposition~\ref{prop:wadsley}).

\begin{defn}
We write $\volX$ for the real number defined by (\ref{eqn:volume})
and call it the \emph{volume of}~$X$, even though
$\alpha\wedge(\rmd\alpha)^n$ is not, in general, a volume form.
\end{defn}

Much of this paper is a rumination on the consequences and ramifications of
Proposition~\ref{prop:volume}, leading us ultimately towards an
affirmative answer to Viterbo's question (Theorem~\ref{thm:reeb}),
which shows that Proposition~\ref{prop:volume} is indeed a nontrivial
statement, even within the class of Reeb vector fields.
We pay special attention
to the cases where the geodesible vector field
$X$ generates an $S^1$-action, or where
the flow of $X$ admits a global surface of section.
In these cases, one can compute $\volX$
and give it a geometric interpretation.

Along the way, we
introduce the Euler class $e_X$ of a geodesible vector field $X$
in the basic cohomology of the foliation it determines, and we
argue that Proposition~\ref{prop:volume} ought to be interpreted
as a statement in basic cohomology (Proposition~\ref{prop:volume-basic}).
These considerations
will allow us to establish a criterion for the vanishing of $e_X$ in
terms of the existence of a transverse invariant foliation
(Theorem~\ref{thm:vanishing}). Geodesible vector fields $X$ with
$e_X=0$ exist precisely on manifolds that fibre over~$S^1$
(Corollary~\ref{cor:fibration}).

In Section~\ref{section:seifert} we compute $\volX$ for vector fields
that define a Seifert fibration on a $3$-manifold. This computation involves
the use of global surfaces of section. With similar arguments we
prove the theorems of Gau{\ss}--Bonnet and Poincar\'e--Hopf
for closed, oriented $2$-dimensional orbifolds in
Section~\ref{section:gauss}.

For certain geodesible vector fields $X$ whose flow admits
a transverse holomorphic structure, we can relate $\volX$ to
the Bott invariant of that structure. This is the content
of Section~\ref{section:transhol}.

In Section~\ref{section:sos} we derive a formula for $\volX$
when $X$ admits a global surface of section. After presenting the answer to
Viterbo's question in Section~\ref{section:reeb},
we end the paper in Section~\ref{section:orbitequ}
with a brief discussion of orbit equivalent
geodesible vector fields.
\section{Dimension three}
In dimension three, the answer to Viterbo's question is negative.

\begin{prop}
\label{prop:three}
Let $\alpha_0,\alpha_1$ be two contact forms on a closed
$3$-manifold $M$
sharing the same Reeb vector field~$R$. Then $\alpha_0$ and $\alpha_1$
define the same orientation of~$M$. Furthermore, there is an isotopy
$(\psi_t)_{t\in[0,1]}$ of $M$, starting at $\psi_0=\mathrm{id}_M$,
such that $\psi_1^*\alpha_1=\alpha_0$ and $(\psi_t^*)^{-1}\alpha_0$
is a contact form with Reeb vector field $R$ for all $t\in [0,1]$.
\end{prop}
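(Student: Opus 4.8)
The plan is to exploit the fact that two contact forms with the same Reeb field $R$ differ by a $1$-form that vanishes on $R$, is $R$-invariant, and whose value on the contact structure is constrained. Write $\alpha_1 = \alpha_0 + \gamma$. By the normalisation $\alpha_0(R)=\alpha_1(R)=1$ we have $\gamma(R)=0$, and by $L_R\alpha_i=0$ together with the Cartan formula we get $i_R\rmd\gamma=0$ as well. First I would consider the linear path $\alpha_t := \alpha_0 + t\gamma = (1-t)\alpha_0 + t\alpha_1$ and check that each $\alpha_t$ is again a contact form with Reeb field $R$. Since $\gamma(R)=0$ and $i_R\rmd\gamma=0$, one has $i_R(\alpha_t\wedge\rmd\alpha_t)=\alpha_t\wedge(\text{something pulled back from the leaf space})$; more directly, $\alpha_t\wedge\rmd\alpha_t = \alpha_0\wedge\rmd\alpha_0 + t^2\,\gamma\wedge\rmd\gamma + t(\cdots)$, and on the $2$-plane field $\xi=\ker\alpha_0$ — which also equals $\ker\alpha_t$ along $R$ only up to the term $t\gamma|_\xi$ — I would argue using $i_R\rmd\alpha_t=0$ that $\rmd\alpha_t$ is determined by its restriction to $\xi_0$, and that restriction varies affinely while the contact condition is an open condition that, by Proposition~\ref{prop:volume}, is actually \emph{constant} in the relevant sense: $\int_M\alpha_t\wedge\rmd\alpha_t$ is independent of $t$. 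The nonvanishing of $\alpha_t\wedge\rmd\alpha_t$ pointwise is the real point; here I would use that $i_R\rmd\alpha_t=0$ forces $\rmd\alpha_t$ to have rank exactly $2$ wherever it is nonzero, so $\alpha_t\wedge\rmd\alpha_t$ vanishes at a point iff $\rmd\alpha_t|_{T_pM}=0$ there, and then show this cannot happen because $\alpha_0\wedge\rmd\alpha_0$ and $\alpha_1\wedge\rmd\alpha_1$ are both nowhere zero and define — by Proposition~\ref{prop:volume} and connectedness of $M$ — the same sign of the volume form, ruling out a sign change of $\alpha_t\wedge\rmd\alpha_t$ along the path. This simultaneously gives the orientation claim.

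With the path $(\alpha_t)$ of contact forms all having Reeb field $R$ in hand, the isotopy is produced by a Moser-type argument. I would look for a time-dependent vector field $Y_t$ with flow $\psi_t$ satisfying $\frac{\rmd}{\rmd t}(\psi_t^*\alpha_t)=0$, i.e. $\psi_t^*(L_{Y_t}\alpha_t + \dot\alpha_t)=0$, equivalently $L_{Y_t}\alpha_t = -\gamma$. Using Cartan, $i_{Y_t}\rmd\alpha_t + \rmd(i_{Y_t}\alpha_t) = -\gamma$. The standard trick is to demand $i_{Y_t}\alpha_t=0$ (so $Y_t$ is tangent to $\xi_t$), reducing the equation to $i_{Y_t}\rmd\alpha_t = -\gamma$; since $\rmd\alpha_t$ is nondegenerate on $\xi_t$ and $\gamma(R)=0$ means $\gamma$ pulls back from $\xi_t$ in the appropriate sense, this has a unique solution $Y_t\in\Gamma(\xi_t)$, smooth in $t$. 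Because $M$ is closed, $Y_t$ integrates to a global isotopy $(\psi_t)_{t\in[0,1]}$ with $\psi_0=\mathrm{id}$ and $\psi_1^*\alpha_1=\alpha_0$. Moreover $(\psi_t^*)^{-1}\alpha_0 = \psi_{t*}\alpha_0$; I would verify that along the construction $\psi_{t*}\alpha_0$ is exactly a reparametrisation keeping the Reeb field — in fact one checks $\psi_{t*}\alpha_0$ has Reeb field $R$ by noting that $L_{Y_t}R$ lies in $\ker\alpha_t$ direction appropriately, or more cleanly that the Moser equation was set up precisely so that $\psi_t^*\alpha_t=\alpha_0$ for all $t$, whence $(\psi_t^*)^{-1}\alpha_0=\alpha_t$, which by the first part is a contact form with Reeb field $R$.

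The main obstacle I anticipate is the pointwise nonvanishing of $\alpha_t\wedge\rmd\alpha_t$ along the linear path: Proposition~\ref{prop:volume} only controls the \emph{integral}, so a priori the form could degenerate at some intermediate time. The key observation that saves this in dimension three is the rank constraint: $i_R\rmd\alpha_t=0$ on a $3$-manifold forces $\rmd\alpha_t$ to be, at each point, either zero or of rank $2$ with kernel spanned by $R$; consequently $\alpha_t\wedge\rmd\alpha_t$ is a nonnegative or nonpositive multiple of a fixed volume form with a definite sign that is locally constant on the open set where it is nonzero, and one must show that open set is all of $M$ for every $t$. I would close this by a continuity/connectedness argument in $t$ combined with the fact established above that the sign is pinned down at $t=0$ and $t=1$ to be the same — the set of $t$ for which $\alpha_t$ is contact is open, and the integral formula (\ref{eqn:volume}) forbids it from having a positive-measure gap, which in this $1$-parameter setting forces it to be the whole interval. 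This dimension-specific rank phenomenon is exactly why the analogous statement fails in higher dimensions, consistent with Theorem~\ref{thm:reeb}.
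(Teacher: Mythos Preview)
Your overall architecture --- linear interpolation $\alpha_t=(1-t)\alpha_0+t\alpha_1$ followed by the Moser trick with $Y_t\in\ker\alpha_t$ solving $i_{Y_t}\rmd\alpha_t=-(\alpha_1-\alpha_0)$ --- is exactly the paper's approach, and your Moser paragraph is correct, including the clean observation that $(\psi_t^*)^{-1}\alpha_0=\alpha_t$ follows directly from $\psi_t^*\alpha_t=\alpha_0$.

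The genuine gap is in your argument that $\alpha_t$ is contact for every $t$. Your continuity/integral argument in the final paragraph does not work: Proposition~\ref{prop:volume} tells you $\int_M\alpha_t\wedge\rmd\alpha_t$ is constant and nonzero, but this says nothing about pointwise nonvanishing at a particular~$t$; the form could degenerate on a set of positive measure while the integral remains nonzero. And in the earlier paragraph you try to rule out a sign change of $\alpha_t\wedge\rmd\alpha_t$ along the $t$-path, but that $3$-form is \emph{quadratic} in~$t$, so it could touch zero without changing sign.

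The fix --- and this is what the paper does in one sentence --- is to look not at $\alpha_t\wedge\rmd\alpha_t$ but at the restriction of $\rmd\alpha_t$ to a fixed $2$-plane $\eta_p\subset T_pM$ transverse to~$R$. Since $i_R\rmd\alpha_t=0$, the form $\rmd\alpha_t$ is determined by this restriction, and $\rmd\alpha_t|_{\eta_p}$ is \emph{affine} in~$t$, taking values in the $1$-dimensional space $\Lambda^2\eta_p^*$. Its endpoint values $\rmd\alpha_0|_{\eta_p}$ and $\rmd\alpha_1|_{\eta_p}$ are nonzero and (by the orientation claim you already established via Proposition~\ref{prop:volume}) have the same sign; an affine path in $\R$ between same-sign values never vanishes. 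Hence $\rmd\alpha_t|_{\eta_p}\neq 0$, so $\alpha_t$ is contact with Reeb field~$R$. You had all the ingredients for this (the rank constraint from $i_R\rmd\alpha_t=0$, the same-sign observation), but you applied them to the wrong object.
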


\begin{proof}
The fact that $\alpha_0$ and $\alpha_1$ define the same
orientation of $M$ follows from Proposition~\ref{prop:volume},
since by (\ref{eqn:volume}) the two volume forms $\alpha_i\wedge
\rmd\alpha_i$ must have the same sign.

Set $\alpha_t:=(1-t)\alpha_0+t\alpha_1$, $t\in [0,1]$. Since
$\rmd\alpha_0$ and $\rmd\alpha_1$ restrict to nondegenerate $2$-forms
defining the same orientation
on any tangent $2$-plane field transverse to~$R$, so does $\rmd\alpha_t$.
It follows that $\alpha_t$ is likewise a contact form with Reeb vector
field~$R$. Now apply the Moser trick~\cite[p.~60]{geig08} to the
equation
\begin{equation}
\label{eqn:isotopy3}
\psi_t^*\alpha_t=\alpha_0,
\end{equation}
where we would like the isotopy $(\psi_t)$ to be the flow of a
time-dependent vector field $X_t\in\ker\alpha_t$. Under this
last assumption, by differentiating (\ref{eqn:isotopy3})
we find
\[ \alpha_1-\alpha_0+i_{X_t}\rmd\alpha_t=0,\]
which has a unique solution $X_t\in\ker\alpha_t$.
\end{proof}

Nonetheless, the question how to compute $\mathrm{vol}_R$
for the Reeb vector field $R$ on a closed contact $3$-manifold
$(M,\alpha)$ is extremely interesting.
Cristofaro-Gardiner, Hutchings and Ramos~\cite[Theorem~1.2]{chr15}
have established a deep connection between $\mathrm{vol}_R$
and embedded contact homology (ECH). For a contact $3$-manifold $(M,\alpha)$
with nonzero contact ECH invariant and finite ECH capacities
$c_k(M,\alpha)$, $k\in\N_0$,
the volume of $R$ can be computed as
\[ \mathrm{vol}_R=\lim_{k\rightarrow\infty}\frac{c_k(M,\alpha)^2}{2k}.\]
Through this asymptotic formula,
$\mathrm{vol}_R$ is determined in a subtle way
by the periodic Reeb orbits and their actions.
\section{Geodesible vector fields and taut foliations}
As shown by Wadsley~\cite{wads75}, for a nonsingular vector field
$X$ the existence of a $1$-form $\alpha$ satisfying conditions
(\ref{eqn:normalise}) and (\ref{eqn:iXdalpha}) is equivalent to $X$
being geodesible. Here we briefly recall
the proof of this result, since it is essential to our discussion; see
also~\cite{gluc80,sull78}. Notice that $\volX$ is only defined
for vector fields $X$ on closed manifolds of odd dimension,
but all the considerations about geodesible vector fields in this
and the following two sections make sense,
unless stated otherwise, for manifolds of arbitrary dimension.

\begin{defn}
\label{defn:geodesible}
(a) A nonsingular vector field $X$ on a manifold $M$ is called
\emph{geodesible} if there exists a Riemannian metric on $M$ with
respect to which $X$ has unit length and the flow lines of $X$ are
geodesics.

(b) A $1$-dimensional foliation $\FF$ on a manifold $M$ is
called \emph{taut} if there exists a Riemannian metric on $M$ for
which the leaves of $\FF$ (suitably parametrised) are geodesics.
\end{defn}

\begin{lem}
\label{lem:wadsley}
Let $\bigl(M,\langle\,.\,,\,.\,\rangle\bigr)$ be a Riemannian manifold
with Levi-Civita connection~$\nabla$. Let $X$ be a vector field of unit
length, and set $\alpha=\langle X,\,.\,\rangle$. Then
\[ L_X\alpha=\langle\nabla_X X,\,.\,\rangle.\]
\end{lem}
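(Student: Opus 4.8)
The plan is to evaluate both sides on an arbitrary vector field $Y$ and reduce the identity to the two defining properties of the Levi-Civita connection, namely metric compatibility and vanishing torsion, together with the unit-length hypothesis on $X$. Concretely, I would start from the Lie-derivative formula $(L_X\alpha)(Y)=X\bigl(\alpha(Y)\bigr)-\alpha([X,Y])=X\langle X,Y\rangle-\langle X,[X,Y]\rangle$, which holds for any $1$-form written as $\alpha=\langle X,\,.\,\rangle$.

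Next I would expand the first term by metric compatibility, $X\langle X,Y\rangle=\langle\nabla_XX,Y\rangle+\langle X,\nabla_XY\rangle$, and rewrite the bracket by torsion-freeness, $[X,Y]=\nabla_XY-\nabla_YX$, so that $\langle X,[X,Y]\rangle=\langle X,\nabla_XY\rangle-\langle X,\nabla_YX\rangle$. Substituting both into the formula above, the terms $\langle X,\nabla_XY\rangle$ cancel and one is left with $(L_X\alpha)(Y)=\langle\nabla_XX,Y\rangle+\langle X,\nabla_YX\rangle$.

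It then remains to observe that the last term vanishes: differentiating the constant function $\langle X,X\rangle\equiv 1$ in the direction $Y$ gives $2\langle\nabla_YX,X\rangle=0$, hence $\langle X,\nabla_YX\rangle=0$. This yields $(L_X\alpha)(Y)=\langle\nabla_XX,Y\rangle$ for all $Y$, which is the asserted identity.

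I do not expect a genuine obstacle here; the only point that requires a moment's care is keeping track of which slot the connection acts on when applying metric compatibility, and recognising that the unit-length condition is exactly what is needed to discard the leftover term $\langle X,\nabla_YX\rangle$ — without it one would only get $L_X\alpha=\langle\nabla_XX,\,.\,\rangle+\tfrac12\,\rmd\langle X,X\rangle$.
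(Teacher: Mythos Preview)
Your proof is correct and follows essentially the same route as the paper's: both evaluate $(L_X\alpha)(Y)$ using metric compatibility, torsion-freeness, and the unit-length condition $\langle X,X\rangle=1$. The only cosmetic difference is that the paper first reduces to the case $[X,Y]=0$ by locally extending a tangent vector $Y_p$ to an $X$-invariant field, whereas you carry the bracket term $\alpha([X,Y])$ through and cancel $\langle X,\nabla_XY\rangle$ directly---your version is arguably a little cleaner for avoiding that preliminary reduction.
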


\begin{proof}
The claimed identity is a pointwise statement.
Locally one can always extend a tangent vector $Y_p\in T_pM$ to 
an $X$-invariant vector field~$Y$, i.e.\ a vector field satisfying
$[X,Y]=0$. Therefore, it suffices to verify the identity
\[ (L_X\alpha)(Y)=\langle\nabla_X X,Y\rangle\]
for such $X$-invariant vector fields~$Y$.
Notice that $\nabla$ being torsion-free
then translates into $\nabla_XY=\nabla_YX$.
Using the fact that the Lie derivative
commutes with contraction, we compute
\begin{eqnarray*}
(L_X\alpha)(Y) & = & L_X(\alpha(Y))-\alpha(L_XY)\;=\;L_X(\alpha(Y))\\
  & = & X\langle X,Y\rangle\\
  & = & \langle\nabla_XX,Y\rangle+\langle X,\nabla_XY\rangle\\
  & = & \langle\nabla_XX,Y\rangle+\langle X,\nabla_YX\rangle\\
  & = & \langle\nabla_XX,Y\rangle+\frac{1}{2}Y\langle X,X\rangle\\
  & = & \langle\nabla_XX,Y\rangle.\qed
\end{eqnarray*}
\renewcommand{\qed}{}
\end{proof}

In the following proposition, the equivalence of (i) with (iv)
is due to Sullivan~\cite{sull78}, who gives an entirely
geometric proof. A more formal proof is given in
\cite[Proposition~6.7]{tond97}; the proof I give is a little more
direct.

\begin{prop}[Wadsley, Sullivan]
\label{prop:wadsley}
Let $X$ be a nonsingular vector field on a manifold~$M$. Then the
following are equivalent:
\begin{itemize}
\item[(i)] $X$ is geodesible;
\item[(ii)] there exists a $1$-form $\alpha$ on $M$ with
$\alpha(X)=1$ and $L_X\alpha=0$;
\item[(iii)] there exists a $1$-form $\alpha$ on $M$ with
$\alpha(X)=1$ and $i_X\rmd\alpha=0$;
\item[(iv)] there is a hyperplane field $\eta$ transverse to $X$ and
invariant under the flow of~$X$.
\end{itemize}
\end{prop}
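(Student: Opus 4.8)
The plan is to establish the cycle of implications $(i)\Rightarrow(ii)\Rightarrow(iii)\Rightarrow(iv)\Rightarrow(i)$, each leg being short once the right construction is in place.

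First, $(i)\Rightarrow(ii)$: if $X$ is geodesible with unit length for some metric $\langle\,.\,,\,.\,\rangle$, set $\alpha=\langle X,\,.\,\rangle$. Then $\alpha(X)=\langle X,X\rangle=1$, and by Lemma~\ref{lem:wadsley} we have $L_X\alpha=\langle\nabla_XX,\,.\,\rangle$, which vanishes precisely because the flow lines of $X$ are geodesics (so $\nabla_XX=0$). Next, $(ii)\Rightarrow(iii)$ is immediate from the Cartan formula $L_X\alpha=i_X\rmd\alpha+\rmd(i_X\alpha)=i_X\rmd\alpha+\rmd(1)=i_X\rmd\alpha$, exactly as already used in the proof of Proposition~\ref{prop:volume}. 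For $(iii)\Rightarrow(iv)$, take $\eta=\ker\alpha$; since $\alpha(X)=1\neq0$, this hyperplane field is transverse to~$X$, and flow-invariance of $\eta$ follows from $L_X\alpha=i_X\rmd\alpha=0$, which says $\alpha$ itself is flow-invariant, hence so is its kernel.

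The one implication requiring a genuine construction is $(iv)\Rightarrow(i)$. Given a flow-invariant hyperplane field $\eta$ transverse to~$X$, I would build a metric by declaring $X$ to be a unit vector orthogonal to~$\eta$, and then choosing \emph{any} flow-invariant metric on the bundle~$\eta$. The latter is the crux: one cannot average over the flow directly if orbits are noncompact, so instead I would pick an arbitrary fibre metric $g_\eta$ on $\eta$ and replace it by an invariant one using the transverse structure — concretely, since $\eta$ is the kernel of the $X$-invariant $1$-form $\alpha$ defined by $\alpha(X)=1$, $\alpha|_\eta=0$ (which exists because $\eta\oplus\langle X\rangle=TM$), one works instead as follows. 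Actually the cleanest route is to observe that $(iv)\Rightarrow(ii)$: define $\alpha$ by $\alpha(X)=1$ and $\alpha|_\eta=0$; flow-invariance of both $X$ and $\eta$ forces $L_X\alpha=0$. So it suffices to prove $(ii)\Rightarrow(i)$. Given such an $\alpha$, choose any metric $h$ on $M$ and define a new metric $g$ by keeping the splitting $TM=\ker\alpha\oplus\langle X\rangle$ $g$-orthogonal, setting $g(X,X)=1$, and letting $g|_{\ker\alpha}$ be the restriction of~$h$. Then $X$ has unit length, but $\nabla^g_XX$ need not vanish. To fix this, note $\nabla^g_XX\in\ker\alpha$ (since $g(\nabla^g_XX,X)=\tfrac12X g(X,X)=0$), and one computes that the $1$-form $\langle\nabla^g_XX,\,.\,\rangle=L_X\alpha'$ where $\alpha'=g(X,\,.\,)=\alpha$; but $L_X\alpha=0$ by hypothesis, so $\nabla^g_XX=0$ and the flow lines are already geodesics. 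The subtle point to verify carefully is that $g(X,\,.\,)$ equals $\alpha$ on the nose — this holds because both annihilate $\ker\alpha$ and both send $X$ to~$1$ — so that Lemma~\ref{lem:wadsley} applies verbatim and closes the loop.

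The main obstacle, then, is not any single hard estimate but making sure the metric in $(ii)\Rightarrow(i)$ is constructed so that its associated $1$-form is \emph{exactly} the given~$\alpha$; once that identification is made, Lemma~\ref{lem:wadsley} does all the work. A secondary point worth stating explicitly is that in $(iv)\Rightarrow(ii)$ the $1$-form $\alpha$ with $\alpha|_\eta=0$ and $\alpha(X)=1$ is uniquely determined pointwise by the decomposition $TM=\eta\oplus\langle X\rangle$, and its flow-invariance is forced by that of the decomposition — this is where hypothesis~(iv) is actually used.
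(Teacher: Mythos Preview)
Your proof is correct and follows essentially the same route as the paper: you use Lemma~\ref{lem:wadsley} for $(i)\Leftrightarrow(ii)$ with exactly the same metric construction (declare $X$ unit and orthogonal to $\ker\alpha$, so that $g(X,\,\cdot\,)=\alpha$), the Cartan formula for $(ii)\Leftrightarrow(iii)$, and $\eta=\ker\alpha$ for $(iii)\Rightarrow(iv)$; the paper organises this as three separate equivalences rather than a cycle, but the content is identical. The only point where the paper is more explicit than your sketch is $(iv)\Rightarrow(ii)$: rather than asserting that flow-invariance of the splitting forces $L_X\alpha=0$, it argues that invariance of $\eta$ gives $L_X\alpha=f\alpha$, then observes $(i_X\rmd\alpha)(X)=0$ kills~$f$ --- but your conceptual justification (the $1$-form is determined by the decomposition, and the decomposition is preserved by the flow) is equally valid.
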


\begin{proof}
The equivalence of (ii) and (iii) is clear from the Cartan formula.
We first prove the equivalence of (i) and (ii).

Assuming (i), we take $\langle\,.\,,\,.\,\rangle$ to be the metric
for which the flow lines of $X$ are geodesics parametrised by arc length
and set $\alpha=\langle X,\,.\,\rangle$. Then $\nabla_X X=0$,
and (ii) follows from the lemma.

Conversely, given $\alpha$ as in (ii) we choose a metric
$\langle\,.\,,\,.\,\rangle$ on $M$
with $\langle X,X\rangle =1$ and $X\perp\ker\alpha$. Then
$\alpha=\langle X,\,.\,\rangle$, and the vanishing of $L_X\alpha$
implies, by the lemma, that $\nabla_XX=0$.

Next we show the equivalence of (ii) and (iv). Given (ii),
the hyperplane field $\eta:=\ker\alpha$ satisfies (iv). Conversely,
given $\eta$ as in (iv), define a $1$-form $\alpha$ on $M$
by the conditions $\alpha(X)=1$ and $\ker\alpha=\eta$.
Then $i_X\rmd\alpha=L_X\alpha$, and the latter equals $f\alpha$ for
some $f\in C^{\infty}(M)$ by the invariance of~$\eta$. Thus, $i_X\rmd\alpha$
vanishes on~$\eta$. Since $TM=\eta\oplus\langle X\rangle$,
the Lie derivative $L_X\alpha=i_X\rmd\alpha$ vanishes identically.
\end{proof}

\begin{ex}
The Reeb vector field of a contact form or a stable
Hamiltonian structure~\cite{civo15} is geodesible.
\end{ex}

The following characterisation of \emph{oriented} taut $1$-dimensional
foliations, first observed in~\cite{sull78}, is then immediate.
We write $\FF=\langle X\rangle$ with any nonsingular vector field
$X$ whose flow lines are the leaves of~$\FF$.

\begin{prop}
\label{prop:sullivan}
The oriented $1$-dimensional foliation $\FF=\langle X\rangle$ is taut
if and only if there is a $1$-form $\alpha$ on $M$ with
$\alpha(X)>0$ and $i_X\rmd\alpha=0$.
\end{prop}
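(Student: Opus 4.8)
The plan is to reduce Proposition~\ref{prop:sullivan} to the already-established Proposition~\ref{prop:wadsley}, since the only real difference between the two is that we no longer insist on a specific parametrisation of the leaves: a 1-form $\alpha$ with $\alpha(X)>0$ is the ``unnormalised'' version of one with $\alpha(X)=1$, and a vector field whose \emph{flow lines} (as unparametrised curves) are geodesics need not be the unit-speed generator. So the first step is to make precise the two directions of equivalence modulo rescaling $X$ by a positive function.

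For the ``if'' direction, suppose $\alpha$ satisfies $\alpha(X)>0$ and $i_X\rmd\alpha=0$. Put $f:=\alpha(X)\in C^\infty(M)$, $f>0$, and consider the rescaled vector field $X':=f^{-1}X$, which has the same oriented flow lines, so $\FF=\langle X'\rangle$. Then $\alpha(X')=1$, and $i_{X'}\rmd\alpha=f^{-1}i_X\rmd\alpha=0$, so $X'$ satisfies condition (iii) of Proposition~\ref{prop:wadsley}. Hence $X'$ is geodesible: there is a metric for which $X'$ has unit length and its flow lines are geodesics. But the leaves of $\FF$ are exactly these flow lines, so $\FF$ is taut.

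For the ``only if'' direction, suppose $\FF=\langle X\rangle$ is taut, witnessed by a metric $\langle\,.\,,\,.\,\rangle$ in which the leaves, suitably parametrised, are geodesics. The point is that a suitable positive reparametrisation $X'=gX$ (with $g>0$) of $X$ has unit length and unit-speed geodesic flow lines --- indeed, take $g=\langle X,X\rangle^{-1/2}$, so that $X'$ has unit length, and a unit-speed parametrisation of a geodesic is again a geodesic, so $X'$ is geodesible in the sense of Definition~\ref{defn:geodesible}(a). By Proposition~\ref{prop:wadsley} there is a 1-form $\alpha$ with $\alpha(X')=1$ and $i_{X'}\rmd\alpha=0$. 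Then $\alpha(X)=g^{-1}>0$ and $i_X\rmd\alpha=g\,i_{X'}\rmd\alpha=0$, as required.

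The only point that needs a little care --- and which I would expect to be the main (minor) obstacle --- is the bookkeeping about reparametrisation: one must check that ``the flow lines of a positively rescaled vector field'' and ``a geodesic reparametrised to unit speed'' match up cleanly with Definition~\ref{defn:geodesible}(b), where the leaves are required to be geodesics only after being ``suitably parametrised''. Once that is pinned down, both implications are an immediate quote of Proposition~\ref{prop:wadsley} applied to the unit-speed representative, together with the elementary observations that $i_X\rmd\alpha$ scales by a positive function under $X\mapsto gX$ and that $\alpha(gX)=g\,\alpha(X)$ has the same sign as $\alpha(X)$.
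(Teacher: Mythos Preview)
Your proof is correct and follows essentially the same route as the paper: in both directions you rescale $X$ by a positive function (to $X/\alpha(X)$ in one direction, to $X/|X|$ in the other) so as to land in the normalised setting of Proposition~\ref{prop:wadsley}, and then quote the equivalence (i)$\Leftrightarrow$(iii) there. The paper's proof is just the terse version of yours; your extra bookkeeping about reparametrisation is sound and makes explicit what the paper leaves implicit.
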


\begin{proof}
If $\FF=\langle X\rangle$ is taut, rescale $X$ to a vector field
of length $1$ with respect to the metric that makes the leaves of $\FF$
geodesics. Then the existence of the desired $1$-form $\alpha$
follows from the equivalence of (i) and (iii) in
Proposition~\ref{prop:wadsley}.

Conversely, given $\alpha$, the rescaled vector field
$X/\alpha(X)$, which likewise spans~$\FF$,
satisfies (iii) in Proposition~\ref{prop:wadsley}.
\end{proof}

\begin{rem}
(1) Alternatively, one can derive the equivalence of (i) and (iii)
in Proposition~\ref{prop:wadsley} from the identity
\[ i_X\rmd\alpha=
\langle\nabla_XX,\,.\,\rangle-\rmd\bigl(\langle X,X\rangle/2\bigr),\]
where again $\alpha=\langle X,\,.\,\rangle$; this identity holds for
any vector field~$X$, see~\cite[Section~2.3]{civo15}.

(2) The main point of Sullivan's article \cite{sull78} is a characterisation
of taut foliations in terms of the absence of ``tangent homologies''.
I refer to \cite{gluc80} for a beautiful discussion of Sullivan's
theorem; there one can find examples of $1$-dimensional
oriented foliations that are not taut.
\end{rem}
\section{Basic cohomology}
Here are the elementary notions of basic differential forms
and basic cohomology associated with a foliation. I restrict
attention to oriented $1$-dimensional foliations $\FF=\langle X\rangle$;
for a more comprehensive treatment see~\cite[Chapter~4]{tond97}.

\begin{defn}
A differential form $\omega$ on $(M,\FF)$ is called \emph{basic} if
\[ i_X\omega=0\;\;\;\text{and}\;\;\; i_X\rmd\omega =0.\]
\end{defn}

Notice that this definition does not depend on the choice of
vector field $X$ spanning~$\FF$.
We write $\OmegaB^k(\FF)$ for the vector space of basic $k$-forms
on $(M,\FF)$. The usual exterior differential $\rmd$ restricts to
\[ \dB\co\OmegaB^k\longrightarrow\OmegaB^{k+1},\]
and the \emph{basic cohomology groups} $\HB^k(\FF)$ are defined as
the cohomology groups of the complex $\bigl(\OmegaB^{\bullet}(\FF),\dB\bigr)$.
The cohomology class of a $k$-form $\omega\in\ker\dB$ is written as
$[\omega]_{\mathrm{B}}\in\HB^k(\FF)$.

The following definitions are motivated by Propositions \ref{prop:wadsley}
and~\ref{prop:sullivan}. The notation $C_X,C_{\FF}$ is chosen
because the $1$-form $\alpha=\langle X,\,.\,\rangle$ (with $X$ of unit length)
is the \emph{characteristic form} of~$\FF$~\cite[p.~69]{tond97}
with respect to the metric $\langle\,.\,,\,.\,\rangle$. We adapt this
definition to the case of geodesible vector fields, where it is reasonable
to consider only those metrics for which the flow lines of $X$ are geodesics.

\begin{defn}
Let $X$ be a geodesible vector field. Any $1$-form $\alpha$ with
$\alpha(X)=1$ and $i_X\rmd\alpha=0$ is called a \emph{characteristic $1$-form}
of~$X$. We write $\charX$ for the space of these characteristic forms.
\end{defn}

\begin{defn}
(a) Let $X$ be a geodesible vector field on a manifold~$M$. Set
\[ \Omega_X^1:=\bigl\{\alpha\in\Omega^1(M)\co
\text{$\alpha(X)=c$ for some $c\in\R^+$},\; i_X\rmd\alpha=0\bigr\}\]
and
\[ C_X:=\Omega_X^1/\!\sim,\]
where
\[ \alpha\sim\beta:\Longleftrightarrow\alpha(X)=\beta(X).\]
The equivalence class of $\alpha\in\Omega_X^1$ is written as
$[\alpha]_X\in C_X$. Obviously there is a canonical
identification of $C_X$ with~$\R^+$.

(b) Let $\FF=\langle X\rangle$ be an oriented taut $1$-dimensional
foliation on~$M$. Set
\[ \Omega_{\FF}^1:=\bigl\{\alpha\in\Omega^1(M)\co
\alpha(X)>0,\; i_X\rmd\alpha=0\bigr\}\]
and
\[ C_{\FF}:=\Omega_{\FF}^1/\!\sim,\]
where the equivalence relation $\sim$ is defined as in~(a). The
equivalence class of $\alpha\in\Omega_{\FF}^1$ is written as
$[\alpha]_{\FF}$. Notice that these definitions do not depend
on the choice of~$X$.
\end{defn}

The assumptions on geodesibility and tautness, respectively, guarantee
that we are not talking about empty sets.

The spaces $\Omega_X^1$, $\charX$ and $\Omega_{\FF}^1$ are obviously convex.
The proof of Proposition~\ref{prop:wadsley} shows that, for a geodesible
vector field~$X$, the map
\[ \begin{array}{ccc}
\mathrm{Met}_X             & \longrightarrow & \charX\\
\langle\,.\,,\,.\,\rangle & \longmapsto     & \alpha=\langle X,\,.\,\rangle
\end{array} \]
from the space $\mathrm{Met}_X$ of metrics for which $X$ has unit length
and geodesic flow lines is a Serre fibration with fibre the space of metrics
on a hyperplane field transverse to~$X$, which can be seen
as follows. Given a family $\alpha_{q,t}\in \charX$,
where $t\in [0,1]$ and $q$ varies in some parameter space,
and a family of metrics $\langle\,.\,,\,.\,\rangle_{q,0}$
with $\langle X,\,.\,\rangle_{q,0}=\alpha_{q,0}$, one simply defines
$\langle\,.\,,\,.\,\rangle_{q,t}$ by the following requirements:
\begin{itemize}
\item[(i)] $\langle X,X\rangle_{q,t}=1$;
\item[(ii)] $\ker\alpha_{q,t}\perp X$ with respect to
$\langle\,.\,,\,.\,\rangle_{q,t}$;
\item[(iii)] $\langle\,.\,,\,.\,\rangle_{q,t}|_{\ker\alpha_{q,t}}=
\langle\,.\,,\,.\,\rangle_{q,0}|_{\ker\alpha_{q,0}}$ under the identification
of $\ker\alpha_{q,t}$ with $\ker\alpha_{q,0}$ given by projection along~$X$.
\end{itemize}
Of course, this Serre fibration
property is not terribly useful, since all spaces in question
are contractible.

\begin{prop}
\label{prop:pairing}
Let $M$ be a closed, oriented manifold of dimension~$m$.

\begin{itemize}
\item[(a)] Let $X$ be a geodesible vector field on~$M$.
Set $\FF=\langle X\rangle$. Then the map
\[ \begin{array}{ccccl}
C_X              & \times & \HB^{m-1}(\FF)             
     & \longrightarrow & \R\\
\bigl([\alpha]_X & ,      & [\sigma]_{\mathrm{B}}\bigr)
     & \longmapsto     & [\alpha]_X\bullet[\sigma]_{\mathrm{B}}:=
                         \int_M\alpha\wedge\sigma
\end{array} \]
is well defined.
\item[(b)] Let $\FF$ be an oriented taut
$1$-dimensional foliation on~$M$. Then the map
\[ \begin{array}{ccccl}
C_{\FF}              & \times & \HB^{m-1}(\FF)
     & \longrightarrow & \R\\
\bigl([\alpha]_{\FF} & ,      & [\sigma]_{\mathrm{B}}\bigr)
     & \longmapsto     & [\alpha]_{\FF}\bullet[\sigma]_{\mathrm{B}}:=
                         \int_M\alpha\wedge\sigma
\end{array} \]
is well defined.
\end{itemize}
\end{prop}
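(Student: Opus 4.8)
The plan is to show that the number $\int_M\alpha\wedge\sigma$ depends neither on the representative $\alpha$ of the class $[\alpha]_X$ (respectively $[\alpha]_{\FF}$) nor on the representative $\sigma\in\OmegaB^{m-1}(\FF)$ of the basic cohomology class $[\sigma]_{\mathrm B}$. Everything will reduce to one elementary fact: since $X$ is nonsingular, an $m$-form $\omega$ on the $m$-manifold $M$ with $i_X\omega=0$ vanishes identically, because at each point $X_p\neq 0$ is a member of some frame and $\omega$, being alternating of top degree, is determined by $i_X\omega$.

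First I would treat the dependence on $\sigma$. A second representative of $[\sigma]_{\mathrm B}$ has the form $\sigma+\dB\tau$ with $\tau\in\OmegaB^{m-2}(\FF)$, so it suffices to show $\int_M\alpha\wedge\dB\tau=0$. Since $M$ is closed, Stokes's theorem gives
\[
0=\int_M\rmd(\alpha\wedge\tau)=\int_M\rmd\alpha\wedge\tau-\int_M\alpha\wedge\rmd\tau,
\]
hence $\int_M\alpha\wedge\dB\tau=\int_M\rmd\alpha\wedge\tau$. The $m$-form $\rmd\alpha\wedge\tau$ satisfies $i_X(\rmd\alpha\wedge\tau)=(i_X\rmd\alpha)\wedge\tau+\rmd\alpha\wedge(i_X\tau)=0$, using $i_X\rmd\alpha=0$ (part of the definition of $\Omega_X^1$, resp.\ $\Omega_{\FF}^1$) and $i_X\tau=0$ (basicity of $\tau$). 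By the elementary fact above it vanishes, so $\int_M\alpha\wedge\dB\tau=0$.

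Next I would treat the dependence on $\alpha$. If $\alpha,\alpha'$ both represent the same class, then $\gamma:=\alpha'-\alpha$ satisfies $i_X\gamma=\alpha'(X)-\alpha(X)=0$ and $i_X\rmd\gamma=i_X\rmd\alpha'-i_X\rmd\alpha=0$, i.e.\ $\gamma\in\OmegaB^1(\FF)$. Then $\gamma\wedge\sigma$ is an $m$-form with $i_X(\gamma\wedge\sigma)=(i_X\gamma)\wedge\sigma-\gamma\wedge(i_X\sigma)=0$, so $\gamma\wedge\sigma=0$ and $\int_M\alpha'\wedge\sigma=\int_M\alpha\wedge\sigma$. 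This establishes (a); for (b) the argument is verbatim the same, the only change being that $\alpha(X)$ is now a positive function rather than a positive constant, which plays no role in the computation. (Geodesibility in (a) and tautness in (b) are needed only to know that $\Omega_X^1$, $\Omega_{\FF}^1$, and hence $C_X$, $C_{\FF}$, are nonempty.)

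I do not expect a genuine obstacle here: the only points requiring care are the signs in the graded Leibniz rule for $i_X$ and in Stokes's theorem, together with the observation that a top-degree form annihilated by the nonsingular field $X$ must vanish. In that sense the proposition is a purely formal consequence of Stokes's theorem.
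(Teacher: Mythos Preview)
Your proof is correct and follows essentially the same approach as the paper: both arguments reduce to Stokes's theorem together with the observation that a top-degree form annihilated by the nonsingular vector field $X$ vanishes. The only cosmetic differences are the order of the two steps and your (unnecessary but harmless) remark that $\gamma=\alpha'-\alpha$ is basic, whereas the paper uses only $i_X\gamma=0$.
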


\begin{proof}
We prove (b); the proof of (a) is completely analogous.
Write $\FF=\langle X\rangle$ with some nonsingular vector field $X$
spanning~$\FF$.

(i) We have $i_X\sigma=0$, since $\sigma\in\OmegaB^{m-1}(\FF)$.
Suppose $[\alpha]_{\FF}=[\alpha']_{\FF}$, which means that the function
$\alpha(X)-\alpha'(X)$ is identically zero. It follows that
the $m$-form $(\alpha-\alpha')\wedge\sigma$ vanishes identically,
since its interior product with the nonsingular vector field $X$ vanishes.

(ii) Suppose $[\sigma]_{\mathrm{B}}=[\sigma']_{\mathrm{B}}\in
\HB^{m-1}(\FF)$, that is, $\sigma-\sigma'=\rmd\tau$ for some
$\tau\in\OmegaB^{m-2}(\FF)$. Then
\begin{eqnarray*}
\int_M\alpha\wedge(\sigma-\sigma')
   & = & \int_M\alpha\wedge\rmd\tau\\
   & = & -\int_M\rmd(\alpha\wedge\tau)+\int_M\rmd\alpha\wedge\tau.
\end{eqnarray*}
The first summand vanishes by Stokes's theorem; the integrand
of the second summand vanishes identically, since
$i_X(\rmd\alpha\wedge\tau)=0$.
\end{proof}

Observe that the maps defined in this proposition are positively
homogeneous of degree $1$ on the first factor, and linear in the
second factor.
\section{The Euler class of a geodesible vector field}
Let $X$ be a geodesible vector field on a manifold $M$ and set
$\FF=\langle X\rangle$. Choose a characteristic $1$-form $\alpha$ for~$X$.

\begin{lem}
The basic cohomology class $e_X:=-[\rmd\alpha]_{\mathrm{B}}\in\HB^2(\FF)$
is determined by~$X$.
\end{lem}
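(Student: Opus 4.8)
The plan is to show two things: first, that $\rmd\alpha$ is indeed a basic $2$-form (so that $[\rmd\alpha]_{\mathrm B}$ makes sense), and second, that its basic cohomology class does not depend on the choice of characteristic $1$-form~$\alpha$ for~$X$. For the first point, observe that by definition of a characteristic $1$-form we have $i_X\rmd\alpha=0$, and also $i_X\rmd(\rmd\alpha)=i_X 0=0$ trivially; hence $\rmd\alpha\in\OmegaB^2(\FF)$, and since $\dB(\rmd\alpha)=\rmd\rmd\alpha=0$ it represents a class in $\HB^2(\FF)$.

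The main content is independence of the choice of~$\alpha$. Suppose $\alpha_0,\alpha_1\in\charX$, so $\alpha_i(X)=1$ and $i_X\rmd\alpha_i=0$ for $i=0,1$. Set $\lambda:=\alpha_1-\alpha_0$. Then $\lambda(X)=0$ and $i_X\rmd\lambda=0$, which is exactly the statement that $\lambda\in\OmegaB^1(\FF)$. Therefore $\rmd\alpha_1-\rmd\alpha_0=\rmd\lambda=\dB\lambda$ is a basic coboundary, so $[\rmd\alpha_0]_{\mathrm B}=[\rmd\alpha_1]_{\mathrm B}$ in $\HB^2(\FF)$. Consequently $e_X=-[\rmd\alpha]_{\mathrm B}$ depends only on~$X$, as claimed.

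There is no real obstacle here; the only thing to be slightly careful about is that ``basic'' is a condition on both $\lambda$ and $\rmd\lambda$, but for a $1$-form $\lambda$ with $\lambda(X)=0$ the second condition $i_X\rmd\lambda=0$ has to be checked, and it follows immediately by subtracting the two identities $i_X\rmd\alpha_i=0$. One might also remark that the sign convention $e_X=-[\rmd\alpha]_{\mathrm B}$ is chosen so that $e_X$ restricts, in the case where $X$ generates an $S^1$-action, to the usual Euler class of the corresponding circle bundle (or orbifold Seifert fibration); but the lemma as stated needs only the two-line argument above.
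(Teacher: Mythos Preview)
Your proof is correct and follows essentially the same approach as the paper's: take two characteristic $1$-forms, observe that their difference is a basic $1$-form, and conclude that the difference of their exterior derivatives is a basic coboundary. You are simply a bit more explicit than the paper in verifying that $\rmd\alpha$ is basic and in checking both conditions for $\lambda\in\OmegaB^1(\FF)$.
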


\begin{proof}
Let $\beta$ be a further characteristic $1$-form.
Then $\gamma:=\alpha-\beta\in\OmegaB^1(\FF)$, and
$\rmd\alpha-\rmd\beta=\rmd\gamma=\dB\gamma$, hence
$[\rmd\alpha]_{\mathrm{B}}=[\rmd\beta]_{\mathrm{B}}$.
\end{proof}

I do not know whether the following definition has been made before,
but it is certainly a very natural one.

\begin{defn}
The class $e_X\in\HB^2(\FF)$ is called the
\emph{Euler class} of the geodesible vector field~$X$.
\end{defn}

\begin{ex}
(1) If the flow of $X$ generates a principal $S^1$-action,
where we think of $S^1$ as $\R/\Z$, then $e_X$ can be naturally
identified with the real Euler class $e\otimes\R\in H^2(M/S^1;\R)$
of the $S^1$-bundle $M\rightarrow M/S^1$. Our definition accords
with the usual sign convention, cf.\
\cite[Section~6.2]{mori01}, \cite[Section~7.2]{geig08}.

(2) If the flow of $X$ generates a locally free $S^1$-action, then
$\HB^{\bullet}(\FF)$ may be thought of as the orbifold cohomology
of the orbifold $M/S^1$, and $e_X$ as the real Euler class of the
$S^1$-orbibundle $M\rightarrow M/S^1$. We discuss examples of this
kind in detail in Sections \ref{section:seifert}
and~\ref{section:gauss}. For more information on $S^1$-orbibundles
in the general sense see~\cite{kela20}.
\end{ex}

We shall meet further examples in Section~\ref{section:sos}, where we discuss
surfaces of section for the flow of~$X$.

The next lemma is the generalisation of a result for
connection $1$-forms of principal $S^1$-bundles.

\begin{lem}
Let $X$ be a geodesible vector field and $\omega\in\OmegaB^2(\FF)$
a basic $2$-form with $-[\omega]_{\mathrm{B}}=e_X$. Then there
is a characteristic $1$-form $\beta$ with $\rmd\beta=\omega$.
\end{lem}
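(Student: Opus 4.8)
The plan is to start from the given characteristic $1$-form $\alpha$, for which $-[\rmd\alpha]_{\mathrm B}=e_X=-[\omega]_{\mathrm B}$ by definition of the Euler class, and to correct $\alpha$ by a basic $1$-form so as to make its exterior derivative equal to $\omega$ on the nose. Since $[\rmd\alpha]_{\mathrm B}=[\omega]_{\mathrm B}$ in $\HB^2(\FF)$, there is a basic $1$-form $\gamma\in\OmegaB^1(\FF)$ with $\rmd\alpha-\omega=\dB\gamma=\rmd\gamma$. Set $\beta:=\alpha-\gamma$. Then $\rmd\beta=\rmd\alpha-\rmd\gamma=\omega$, as desired.

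It remains to check that $\beta$ is a genuine characteristic $1$-form of $X$, i.e.\ that $\beta(X)=1$ and $i_X\rmd\beta=0$. For the first, $\gamma\in\OmegaB^1(\FF)$ means in particular $i_X\gamma=0$, so $\beta(X)=\alpha(X)-\gamma(X)=1-0=1$. For the second, $i_X\rmd\beta=i_X\omega=0$ since $\omega$ is basic (basic forms satisfy both $i_X\omega=0$ and $i_X\rmd\omega=0$, but here we only need $i_X\omega=0$ because $\rmd\beta=\omega$ directly, though $i_X\rmd\omega=0$ also holds). Hence $\beta\in\charX$ and $\rmd\beta=\omega$.

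There is essentially no obstacle here: the statement is a direct analogue of the standard fact that any closed $2$-form cohomologous to the curvature of a principal $S^1$-connection is itself the curvature of some connection, and the proof is the usual "subtract an exact correction term" argument, with the only subtlety being to verify that the correction $\gamma$ lies in the \emph{basic} subcomplex so that the corrected form $\beta$ still satisfies the defining conditions of a characteristic $1$-form. That verification is immediate from $i_X\gamma=0$. One should perhaps remark explicitly that $\dB\gamma=\rmd\gamma$ for basic $\gamma$, which is part of the definition of the basic complex, so no circularity arises.
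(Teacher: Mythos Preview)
Your proof is correct and follows essentially the same route as the paper: pick a characteristic form $\alpha$, use $[\omega]_{\mathrm{B}}=[\rmd\alpha]_{\mathrm{B}}$ to find a basic primitive $\gamma$ for the difference, and set $\beta=\alpha\pm\gamma$. The paper's proof is terser and leaves the verification that $\beta\in\charX$ implicit, whereas you spell out $\beta(X)=1$ and $i_X\rmd\beta=0$ explicitly; apart from that (and an immaterial sign convention in the choice of $\gamma$), the arguments are identical.
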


\begin{proof}
Since $[\omega]_{\mathrm{B}}=[\rmd\alpha]_{\mathrm{B}}$, we find
a basic $1$-form $\gamma\in\OmegaB^1(\FF)$ with $\omega=\rmd\alpha+
\rmd\gamma$. Then $\beta:=\alpha+\gamma$ is the desired characteristic form.
\end{proof}

This lemma implies the following proposition.

\begin{prop}
\label{prop:symplectic}
A geodesible vector field $X$ on a manifold $M$ of dimension $2n+1$
is the Reeb vector field of a contact form
if and only if the Euler class $e_X$ has an odd-symplectic representative,
i.e.\ if there is a closed basic $2$-form $\omega\in\Omega^2_{\mathrm{B}}(M)$
with $-[\omega]_{\mathrm{B}}= e_X$ and $\omega^n\neq 0$.
\qed
\end{prop}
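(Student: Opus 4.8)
The plan is to derive the statement directly from the preceding lemma together with Proposition~\ref{prop:wadsley}. Recall the setup: $X$ is a geodesible vector field on $M^{2n+1}$ with $\FF=\langle X\rangle$, and $\alpha$ is a fixed characteristic $1$-form, so that $e_X=-[\rmd\alpha]_{\mathrm{B}}$. The key observation is that a contact form with Reeb vector field $X$ is precisely a characteristic $1$-form $\beta$ of $X$ with $\beta\wedge(\rmd\beta)^n\neq 0$: indeed, $\beta(X)=1$ and $i_X\rmd\beta=0$ are exactly the conditions that $X$ be the Reeb vector field of $\beta$ (given that $\beta$ is contact), and conversely any Reeb vector field satisfies these. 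So the proposition amounts to: $X$ admits such a $\beta$ iff $e_X$ has a representative $\omega\in\OmegaB^2(\FF)$ with $\omega^n\neq 0$.

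First I would prove the forward implication. Suppose $\beta$ is a contact form with Reeb vector field $X$. Then $\beta$ is a characteristic $1$-form of $X$, so $\omega:=\rmd\beta$ is a basic $2$-form (it is closed, and $i_X\rmd\beta=0$, hence also $i_X\omega=0$ and $i_X\rmd\omega=0$) with $-[\omega]_{\mathrm{B}}=-[\rmd\beta]_{\mathrm{B}}=e_X$ by the first lemma of this section. Moreover $\beta\wedge\omega^n\neq 0$ forces $\omega^n\neq 0$ pointwise: if $\omega^n$ vanished at a point, then so would $\beta\wedge\omega^n$ there. Thus $\omega$ is the desired odd-symplectic representative.

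For the converse, suppose $\omega\in\OmegaB^2(\FF)$ is closed with $-[\omega]_{\mathrm{B}}=e_X$ and $\omega^n\neq 0$ everywhere. By the second lemma of this section, there is a characteristic $1$-form $\beta$ of $X$ with $\rmd\beta=\omega$. Then $\beta(X)=1$, $i_X\rmd\beta=i_X\omega=0$, and $\beta\wedge(\rmd\beta)^n=\beta\wedge\omega^n$. To see this is nonvanishing: at each point $p$, since $i_X\omega=0$ and $\omega^n_p\neq 0$, the form $\omega_p$ has rank exactly $2n$ with kernel spanned by $X_p$, so $\omega^n_p$ is a nonzero $2n$-form annihilating $X_p$; wedging with $\beta$, which satisfies $\beta(X)=1$, yields a nonzero $(2n+1)$-form. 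Hence $\beta$ is a contact form, and its Reeb vector field is characterised by $\beta(R)=1$, $i_R\rmd\beta=0$, which $X$ satisfies, so $R=X$.

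I do not expect a serious obstacle here: the content is entirely packaged in the two lemmas already proved in this section, and the only real point is the elementary linear-algebra fact that for a $2$-form $\omega$ on an odd-dimensional space with $i_X\omega=0$, nondegeneracy of $\omega$ on a complement of $X$ is equivalent to $\omega^n\neq 0$, and that this combines with $\beta(X)=1$ to give $\beta\wedge\omega^n\neq 0$. If anything needs care it is making the correspondence ``contact form with Reeb field $X$'' $\leftrightarrow$ ``characteristic $1$-form with $(\rmd\cdot)^n\neq 0$'' precise, but this is standard (cf.\ \cite{geig08}) and can be stated in one line.
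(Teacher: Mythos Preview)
Your proposal is correct and is exactly the argument the paper has in mind: the paper states the proposition as an immediate consequence of the preceding lemma (hence the bare \qed), and you have simply spelled out both directions, including the elementary linear-algebra check that $\beta\wedge\omega^n\neq 0$ follows from $\omega^n\neq 0$, $i_X\omega=0$, and $\beta(X)=1$.
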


The following expression of the volume $\volX$ in terms of the
Euler class is immediate from the definitions.
This is the promised cohomological interpretation and generalisation
of Proposition~\ref{prop:volume}.

\begin{prop}
\label{prop:volume-basic}
Let $X$ be a geodesible vector field on a closed, oriented manifold $M$ of
dimension $2n+1$, and $\alpha$ a characteristic form for~$X$. Then
\[ \volX=(-1)^n[\alpha]_X\bullet e_X^n.\]
If $X$ generates a free $S^1$-action, we have ---
with $e\in H^2(B;\Z)$ denoting the Euler class of the fibration
$M\rightarrow M/S^1=:B$ ---
\[ \volX=(-1)^n\langle e^n,[B]\rangle,\]
where $[B]$ denotes the fundamental class of $B$ and
$\langle\,.\,,\,.\,\rangle$ the Kronecker pairing.
\qed
\end{prop}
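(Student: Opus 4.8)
The plan is to unwind all the definitions and reduce everything to Stokes's theorem, since the statement is asserted to be ``immediate from the definitions''. First I would recall that by definition $\volX=\int_M\alpha\wedge(\rmd\alpha)^n$ for a characteristic form $\alpha$ of $X$, and that $e_X=-[\rmd\alpha]_{\mathrm B}\in\HB^2(\FF)$. Since $i_X\rmd\alpha=0$ and $\rmd(\rmd\alpha)=0$, the $2$-form $\rmd\alpha$ is basic, and so is $(\rmd\alpha)^n\in\OmegaB^{2n}(\FF)$; moreover $\rmd(\rmd\alpha)^n=0$, so $[(\rmd\alpha)^n]_{\mathrm B}=[\rmd\alpha]_{\mathrm B}^n=(-1)^n e_X^n\in\HB^{2n}(\FF)$. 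Now $m=2n+1$, so $(\rmd\alpha)^n$ is a basic $(m-1)$-form, and Proposition~\ref{prop:pairing}(a) gives a well-defined pairing $[\alpha]_X\bullet[(\rmd\alpha)^n]_{\mathrm B}=\int_M\alpha\wedge(\rmd\alpha)^n=\volX$. Substituting $[(\rmd\alpha)^n]_{\mathrm B}=(-1)^n e_X^n$ and using linearity of the pairing in the second slot yields $\volX=(-1)^n[\alpha]_X\bullet e_X^n$, which is the first formula. (One should note in passing that $e_X^n$ is the image of $e_X$ under the $n$-fold cup product $\HB^2(\FF)\to\HB^{2n}(\FF)$, so the right-hand side makes sense.)

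For the second formula I would specialise to the case where $X$ generates a free $S^1$-action, so that $\pi\co M\to B:=M/S^1$ is a principal $S^1$-bundle over a closed, oriented $(2n)$-manifold $B$. Here a characteristic form $\alpha$ is precisely a connection $1$-form for the bundle (normalised so that $\alpha(X)=1$), and by Example~(1) following the definition of the Euler class, $e_X\in\HB^2(\FF)\cong H^2(B;\R)$ is identified with $e\otimes\R$, the real reduction of the integral Euler class $e\in H^2(B;\Z)$. Under the canonical isomorphism $\HB^{\bullet}(\FF)\cong H^{\bullet}(B;\R)$ (basic forms are exactly pullbacks of forms on $B$), we have $(\rmd\alpha)^n=\pi^*\Theta$ for a closed $2n$-form $\Theta$ on $B$ representing $(-1)^n e_X^n$, i.e.\ representing $(-2\pi)^{\,?}$—no: representing $(-1)^n(e\otimes\R)^n$ up to the chosen normalisation, so $[\Theta]=(-1)^n (e\otimes\R)^n\in H^{2n}(B;\R)$.

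It then remains to evaluate $\int_M\alpha\wedge\pi^*\Theta$. The integral of $\pi^*\Theta$ wedged with a form restricting to the standard generator of $H^1$ of each fibre equals $\int_B\Theta$; concretely, integrating over the $S^1$-fibres (``fibre integration'') sends $\alpha\wedge\pi^*\Theta$ to $(\int_{S^1}\alpha)\,\Theta=\Theta$, since $\alpha(X)=1$ means $\alpha$ integrates to $1$ over each orbit, and then $\int_M\alpha\wedge\pi^*\Theta=\int_B\Theta=\langle[\Theta],[B]\rangle=(-1)^n\langle(e\otimes\R)^n,[B]\rangle=(-1)^n\langle e^n,[B]\rangle$, the last equality because the Kronecker pairing of an integral class against the fundamental class is unaffected by tensoring with $\R$. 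Combining with $\volX=\int_M\alpha\wedge(\rmd\alpha)^n$ gives the claimed $\volX=(-1)^n\langle e^n,[B]\rangle$.

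The only genuinely delicate point is bookkeeping of signs and normalisations: one must check that the sign convention fixed in the definition of $e_X$ (namely $e_X=-[\rmd\alpha]_{\mathrm B}$) is the one that makes the identification with the topologist's Euler class $e\in H^2(B;\Z)$ hold on the nose, as asserted in Example~(1); this is exactly the convention recorded in \cite[Section~7.2]{geig08}, so I would simply cite it. Everything else is a routine application of Stokes's theorem (already packaged in Proposition~\ref{prop:pairing}), the multiplicativity of the cup product, the naturality of the Kronecker pairing under $\Z\to\R$, and fibre integration for the fibration $M\to B$.
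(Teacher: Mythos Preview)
Your argument is correct and is exactly the unwinding of definitions that the paper has in mind; the paper gives no proof beyond the remark that the statement is ``immediate from the definitions'' and a \qed, and you have supplied precisely those details (basic-ness of $(\rmd\alpha)^n$, the sign from $e_X=-[\rmd\alpha]_{\mathrm B}$, and fibre integration over the $S^1$-fibres of period~$1$). The only blemish is the self-correcting aside ``$(-2\pi)^{\,?}$---no:'', which you should delete in a clean write-up.
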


Here is a useful vanishing criterion for the Euler class.
For the flow of $X$ to be globally defined,
we assume $M$ to be closed.

\begin{thm}
\label{thm:vanishing}
The Euler class $e_X\in\HB^2(\FF)$
of a geodesible vector field $X$ on a closed manifold $M$
vanishes if and only if $X$ admits a transverse foliation $\TT$
invariant under the flow of~$X$.
\end{thm}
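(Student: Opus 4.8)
The plan is to prove the two implications separately, exploiting the fact (Proposition~\ref{prop:wadsley}) that a geodesible vector field $X$ is characterised by an invariant hyperplane field $\eta$ transverse to $X$, together with a characteristic $1$-form $\alpha$ with $\ker\alpha=\eta$, $\alpha(X)=1$, and $i_X\rmd\alpha=0$, so that $e_X=-[\rmd\alpha]_{\mathrm{B}}$. A "transverse foliation $\TT$ invariant under the flow" should mean a codimension-$1$ foliation $\TT$ everywhere transverse to $X$ whose tangent distribution is preserved by the flow of $X$; thus $\TT$ is an \emph{integrable} invariant transverse hyperplane field, whereas a general geodesible $X$ only supplies a possibly non-integrable one.

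For the direction "$\TT$ exists $\Rightarrow e_X=0$": given such a $\TT$, let $\eta=T\TT$ and let $\alpha$ be the characteristic $1$-form of $X$ with $\ker\alpha=\eta$, which exists and is a characteristic form for $X$ by the equivalence of (ii) and (iv) in Proposition~\ref{prop:wadsley}. Integrability of $\eta=\ker\alpha$ means, by the Frobenius theorem, that $\rmd\alpha=\alpha\wedge\lambda$ for some $1$-form $\lambda$. Then $i_X\rmd\alpha=0$ forces $\lambda-\lambda(X)\alpha$ to annihilate $X$ and lie in $\ker\alpha$... more directly: $i_X(\alpha\wedge\lambda)=\lambda-\lambda(X)\alpha=0$ gives $\lambda=\lambda(X)\,\alpha$, hence $\rmd\alpha=\alpha\wedge\lambda(X)\alpha=0$. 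So $\rmd\alpha=0$, and $e_X=-[\rmd\alpha]_{\mathrm{B}}=0$ on the nose.

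For the converse "$e_X=0 \Rightarrow \TT$ exists": by the last lemma before Proposition~\ref{prop:symplectic}, if $e_X=-[\omega]_{\mathrm{B}}$ then any basic representative $\omega$ is $\rmd\beta$ for a characteristic form $\beta$; applying this with $\omega=0$ produces a characteristic form $\beta$ with $\rmd\beta=0$. Then $\ker\beta$ is a hyperplane field transverse to $X$, it is integrable by Frobenius since $\beta$ is closed, and it is flow-invariant because $L_X\beta=i_X\rmd\beta+\rmd(\beta(X))=0$. Hence the integral foliation $\TT$ of $\ker\beta$ is the desired transverse invariant foliation, completing the equivalence. The only genuine subtlety — and the step I would flag as the main point to get right — is the Frobenius argument in the first direction: one must use $i_X\rmd\alpha=0$ together with $\alpha(X)=1$ to pin down $\lambda$ and conclude $\rmd\alpha=0$ exactly, not merely that $[\rmd\alpha]_{\mathrm{B}}$ vanishes; everything else is a direct consequence of results already established in the excerpt.
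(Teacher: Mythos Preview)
Your proposal is correct and follows essentially the same route as the paper's proof: both directions hinge on producing a \emph{closed} characteristic $1$-form, with the paper evaluating $\rmd\alpha(Y_1,Y_2)$ directly via integrability where you invoke the Frobenius form $\rmd\alpha=\alpha\wedge\lambda$, and the paper constructing $\beta=\alpha-\gamma$ by hand where you cite the lemma preceding Proposition~\ref{prop:symplectic}. The differences are purely cosmetic.
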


\begin{proof}
Suppose that $e_X=0$. As before we
write $\FF=\langle X\rangle$.
Choose a $1$-form $\alpha$ with $\alpha(X)=1$ and $i_X\rmd\alpha=0$.
Then $[\rmd\alpha]_{\mathrm{B}}=-e_X=0$, so there is a basic
$1$-form $\gamma\in\OmegaB^1(\FF)$ with $\rmd\gamma=\rmd\alpha$.
Then $\beta:=\alpha-\gamma$ is a closed $1$-form with $\beta(X)=1$.
In particular, $\ker\beta$ defines a foliation $\TT$ transverse to~$X$, and
$\TT$ is invariant under the flow of $X$ since
$L_X\beta=\rmd(\beta(X))+i_X\rmd\beta=0$.

Conversely, let $\TT$ be a transverse invariant foliation. Define
a $1$-form $\alpha$ by $\alpha(X)=1$ and $\ker\alpha=T\TT$, where
$T\TT$ denotes the distribution of tangent spaces to~$\TT$. 
Then $i_X\rmd\alpha=L_X\alpha$, and the latter equals $f\alpha$
for some $f\in C^{\infty}(M)$ by the invariance of~$\TT$.
This implies $\rmd\alpha(X,Y)=0$ for $Y\in\Gamma(T\TT)$.

Given two (local) vector fields $Y_1,Y_2\in\Gamma(T\TT)$, we compute
\[ \rmd\alpha(Y_1,Y_2)=Y_1\alpha(Y_2)-Y_2\alpha(Y_1)-
\alpha([Y_1,Y_2])=0.\]
Thus, we conclude that $\rmd\alpha=0$, and hence $e_X=0$.
\end{proof}

\begin{cor}
\label{cor:fibration}
A closed manifold $M$ admits a geodesible vector field $X$
with $e_X=0$ if and only if $M$ fibres over~$S^1$.
\end{cor}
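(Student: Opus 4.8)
The plan is to prove Corollary~\ref{cor:fibration} by combining Theorem~\ref{thm:vanishing} with the classical Tischler argument relating fibrations over $S^1$ to nowhere-vanishing closed $1$-forms. First I would prove the easy direction: if $M$ fibres over $S^1$ with projection $p\co M\to S^1=\R/\Z$, then $\alpha:=p^*(\rmd t)$ is a nowhere-vanishing closed $1$-form, so the distribution $\ker\alpha$ is an integrable (by closedness of $\alpha$) hyperplane field, hence defines a foliation $\TT$; it is transverse to any vector field $X$ with $\alpha(X)=1$, such as the horizontal lift of $\partial_t$ for a choice of connection, or more directly a vector field tangent to a trivialising vector field on each fibre's complement. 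Actually the cleanest choice: pick any vector field $X$ on $M$ with $\alpha(X)\equiv 1$ (possible since $\alpha$ is nowhere zero), rescale if necessary; then $i_X\rmd\alpha=0$ trivially since $\rmd\alpha=0$, so $X$ is geodesible by Proposition~\ref{prop:wadsley}(iii), and $\TT=\ker\alpha$ is invariant under the flow of $X$ because $L_X\alpha=\rmd(\alpha(X))+i_X\rmd\alpha=0$; by Theorem~\ref{thm:vanishing}, $e_X=0$.

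For the converse direction, suppose $M$ carries a geodesible vector field $X$ with $e_X=0$. By the proof of Theorem~\ref{thm:vanishing} there is a \emph{closed} $1$-form $\beta$ on $M$ with $\beta(X)=1$; in particular $\beta$ is nowhere vanishing. Now I invoke Tischler's theorem: a closed manifold admitting a nowhere-vanishing closed $1$-form fibres over $S^1$. Since the hypothesis ``closed'' is in force, this applies directly and yields the desired fibration. If one prefers to keep the paper self-contained, one can sketch Tischler's argument: approximate the de Rham class $[\beta]\in H^1(M;\R)$ by a rational class, clear denominators to get a closed $1$-form $\beta'$ with integral periods, $C^0$-close to $\beta$ so still nowhere vanishing; then $\beta'$ is the pullback of $\rmd t$ under a smooth map $M\to S^1$ (integrate $\beta'$ to a circle-valued function), and this map is a submersion because $\beta'$ is nowhere zero, hence a fibre bundle by Ehresmann's theorem since $M$ is compact.

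The main obstacle — really the only nontrivial input — is Tischler's theorem, or equivalently the rational-approximation-plus-Ehresmann argument in the converse direction; everything else is a direct unwinding of Theorem~\ref{thm:vanishing} and the Cartan formula. One small point to handle with care is that Theorem~\ref{thm:vanishing} produces a transverse invariant foliation, and in its proof this foliation is the kernel of a closed $1$-form $\beta$ with $\beta(X)=1$; I would cite that closed $1$-form explicitly rather than just the foliation, since it is the closed $1$-form (not merely the foliation) that feeds into Tischler's theorem. Conversely, for the forward direction I should make sure the vector field $X$ I construct is genuinely nonsingular — which it is, since $\alpha(X)=1$ forces $X\neq 0$ everywhere — so that ``geodesible'' is meaningful. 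No genuinely hard analysis or geometry is involved beyond the citation; the corollary is essentially the statement that ``$e_X=0$ for some geodesible $X$'' $\iff$ ``$M$ admits a nowhere-vanishing closed $1$-form'', married to Tischler.
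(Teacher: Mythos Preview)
Your proposal is correct and follows essentially the same route as the paper: for the fibration $\Rightarrow$ geodesible-with-$e_X=0$ direction you pull back $\rmd t$ and take any $X$ with $\alpha(X)=1$ (exactly the paper's second argument for this direction), and for the converse you extract the closed nonsingular $1$-form $\beta$ from the proof of Theorem~\ref{thm:vanishing} and invoke Tischler, just as the paper does. The only cosmetic difference is that you conclude $e_X=0$ via Theorem~\ref{thm:vanishing} rather than directly from $\rmd\alpha=0$, and you sketch Tischler's argument where the paper merely cites it.
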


\begin{proof}
If $M$ admits a geodesible vector field with $e_X=0$, the fact that
$M$ fibres over $S^1$
follows from the existence of a closed, nonsingular
$1$-form on $M$, established in the foregoing proof,
and a result of Tischler~\cite{tisc70},
cf.~\cite[Section~9.3]{conl01}.

Conversely, a manifold $M$ that fibres over $S^1$ always admits a geodesible
vector field~\cite{gluc80}. Such a manifold $M$ can be written
as $[0,1]\times F/(1,x)\sim(0,\psi(x))$, where $F$ denotes
the fibre and $\psi$ the monodromy of the bundle. Let $g_{\theta}$,
$\theta\in[0,1]$ be any smooth family of metrics on $F$ with $\psi^*g_0=g_1$.
Then $d\theta^2+g_{\theta}$ defines a metric on $[0,1]\times F$ for which the
segments $[0,1]\times\{x\}$ are geodesics, and this metric descends to~$M$.

Alternatively, let $\alpha$ be the pull-back of the $1$-form $\rmd\theta$
under the bundle projection $M\rightarrow S^1$. Then $\rmd\alpha=0$,
so any vector field $X$ on $M$ with $\alpha(X)=1$, i.e.\
any lift of~$\partial_{\theta}$, is geodesible, and clearly $e_X=0$.
\end{proof}

\begin{ex}
For Seifert fibred $3$-manifolds (see the next section), the statement of
Corollary~\ref{cor:fibration} can be found in
\cite[Theorem~5.4]{scot83}.
\end{ex}
\section{Seifert fibred $3$-manifolds}
\label{section:seifert}
In this section we take $M\rightarrow B$ to be a Seifert fibration
of a closed, oriented $3$-manifold $M$ over a closed, oriented
$2$-dimensional orbifold~$B$.
Let $X$ be the vector field whose flow defines an $S^1$-action
on $M$ with orbits equal to the Seifert fibres, where the minimal
period of the regular fibres is assumed to be equal to~$1$.
I refer to~\cite{gela18} and \cite{jane83}
for the basic terminology of Seifert fibrations.

Suppose the Seifert invariants of $M\rightarrow B$ are
\[ \bigl( g;(\alpha_1,\beta_1),\ldots,(\alpha_n,\beta_n)\bigr),\]
where $g\in\N_0$ is the genus of $B$, and the
$(\alpha_i,\beta_i)$, $i=1,\ldots, n$, are pairs of coprime
integers with $\alpha_i\neq 0$. Here the $\alpha_i$ give
the multiplicities of the singular fibres; the pairs with
$\alpha_i=1$ do not correspond to singular fibres, but contribute to
the Euler class of the fibration.

Concretely, $M$ is recovered from these Seifert invariants as
follows. Let $B$ be the closed, oriented surface of genus~$g$,
and remove $n$ disjoint discs to obtain
\[ B_0=B\setminus\Int\bigl(D^2_1\sqcup\ldots\sqcup D^2_n\bigr).\]
Over this surface with boundary, we take the
trivial $S^1$-bundle $M_0=B_0\times S^1\rightarrow B_0$.
Write the boundary $\partial B_0$ with the opposite of its natural
orientation as
\[ -\partial B_0=S^1_1\sqcup\ldots\sqcup S^1_n.\]
We write the fibre class of this trivial fibration as $h=\{*\}\times S^1$,
and on $\partial M_0$ we consider the curves
\[ q_i=S^1_i\times\{0\},\; i=1,\ldots,n;\]
recall that we think of the fibre $S^1$ as $\R/\Z$.
The labels $h,q_1,\ldots,q_n$ should be read as isotopy classes of curves
on $\partial M_0$.

Choose integers $\alpha_i',\beta_i'$, $i=1,\ldots,n$, such that
\[ \begin{vmatrix}
\alpha_i & \alpha_i'\\
\beta_i  & \beta_i'
\end{vmatrix}=1.\]
Further, take $n$ copies $V_i=D^2\times S^1$ of a solid torus,
where $D^2$ is the unit disc in~$\R^2$, with
respective meridian and longitude
\[ \mu_i=\partial D^2\times\{0\},\;\; \lambda_i=\{1\}\times S^1\subset
\partial V_i.\]
Then glue the $V_i$ to $M_0$ along the boundary, where $\partial V_i$
is identified with the component $S_i^1\times S^1$ of $\partial M_0$ via
\begin{equation}
\label{eqn:gluing}
h=-\alpha_i'\mu_i+\alpha_i\lambda_i,\;\;\;
q_i=\beta_i'\mu_i-\beta_i\lambda_i.
\end{equation}
Notice that the fibration of $M_0$ given by the fibre class
$h$ extends to a fibration of $V_i$ with the central fibre of
multiplicity~$\alpha_i$. This is the description of $M\rightarrow B$
with the given Seifert invariants.

The Euler number $e$ of the Seifert fibration with the given Seifert
invariants, defined as the obstruction to the existence of a
section (in the Seifert sense)~\cite[Section~3]{jane83}, is
\[ e=-\sum_{i=1}^n\frac{\beta_i}{\alpha_i}.\]
We now want to use a global surface of section (in a slightly
generalised sense) to derive this formula.

\begin{prop}
\label{prop:seifert-euler}
Let $X$ be a vector field on a closed, oriented $3$-manifold
$M$ defining a Seifert fibration of regular period~$1$ with invariants
\[ \bigl( g;(\alpha_1,\beta_1),\ldots,(\alpha_n,\beta_n)\bigr).\]
Then
\[ \langle e_X,[B]\rangle=-\sum_{i=1}^n\frac{\beta_i}{\alpha_i},\]
where $\langle\,.\,,\,.\,\rangle$ denotes the Kronecker pairing
between $\HB^2(\FF)$ and $H_2(B)$.
\end{prop}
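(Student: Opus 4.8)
The plan is to write $\langle e_X,[B]\rangle=-\langle[\rmd\alpha]_{\mathrm{B}},[B]\rangle=-\int_B\rmd\alpha$ for any characteristic $1$-form $\alpha$ of $X$ (so $\alpha(X)=1$ and $i_X\rmd\alpha=0$), the last integral being that of the closed basic $2$-form $\rmd\alpha$ regarded as an orbifold $2$-form on $B$, and then to evaluate it using a \emph{generalised global surface of section}: a compact surface $\Sigma\subset M$ that projects onto $B$ with degree one and whose boundary runs along the singular fibres.

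To build $\Sigma$, over $B_0$ take the obvious section $S_0=B_0\times\{0\}\subset M_0=B_0\times S^1$; its boundary is $-\sum_i q_i$. Over the solid torus $V_i=D^2\times S^1$ the meridian $\mu_i$ bounds, so from the gluing (\ref{eqn:gluing}) one reads off $[q_i]=-\beta_i[\ell_i]$ in $H_1(V_i)$, where $\ell_i$ is the core $\{0\}\times S^1$ (with a fixed orientation), i.e.\ the singular fibre of multiplicity~$\alpha_i$. Hence $q_i+\beta_i\ell_i$ bounds a surface $\Sigma_i\subset V_i$, which may be taken to be the membrane traced out by a section of the $S^1$-bundle $V_i\setminus\ell_i\to D_i^2\setminus\{0\}$; this $\Sigma_i$ satisfies $\partial\Sigma_i=q_i+\beta_i\ell_i$ and projects onto $D_i^2$ with degree one. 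Gluing $S_0$ to the $\Sigma_i$ along the curves $q_i$ produces a compact surface $\Sigma$ with $\partial\Sigma=\sum_i\beta_i\ell_i$, projecting onto $B$ with degree one.

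Granting that $\pi|_\Sigma$ has degree one in the appropriate orbifold sense, so that $\int_B\rmd\alpha=\int_\Sigma\rmd\alpha$, Stokes's theorem gives
\[ \langle e_X,[B]\rangle=-\int_\Sigma\rmd\alpha=-\int_{\partial\Sigma}\alpha=-\sum_i\beta_i\int_{\ell_i}\alpha. \]
Since $\alpha(X)=1$, the integral of $\alpha$ over a closed orbit of $X$ equals the period of that orbit; the core $\ell_i$ is the singular fibre of multiplicity~$\alpha_i$, hence of minimal period $1/\alpha_i$, so $\int_{\ell_i}\alpha=1/\alpha_i$ (with the orientations arranged consistently), and $\langle e_X,[B]\rangle=-\sum_i\beta_i/\alpha_i$.

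The main obstacle is making rigorous the equality $\int_B\rmd\alpha=\int_\Sigma\rmd\alpha$, since near a singular fibre $\Sigma$ is only immersed and $\pi|_\Sigma$ behaves like a branched cover. The cleanest fix is to work with a specific $\alpha$: take $\alpha=\rmd t$ on $M_0$ (closed, with $\alpha(X)=1$), and on $V_i$, in polar coordinates $(\rho,\psi)$ on $D^2$ and $\tau\in\R/\Z$ on the $S^1$-factor, take $\alpha=f(\rho)\,\rmd\psi+g(\rho)\,\rmd\tau$, where $X=-\alpha_i'\,\partial_\psi+\alpha_i\,\partial_\tau$; the conditions $\alpha(X)=1$ and $i_X\rmd\alpha=0$ force $\alpha_i g-\alpha_i' f\equiv1$, smoothness at the core forces $f(0)=0$ (hence $g(0)=1/\alpha_i$), and matching $\rmd t=\beta_i\,\rmd\psi+\beta_i'\,\rmd\tau$ near $\rho=1$ (read off from (\ref{eqn:gluing})) requires $f(1)=\beta_i$, $g(1)=\beta_i'$ --- consistent precisely because $\begin{vmatrix}\alpha_i&\alpha_i'\\\beta_i&\beta_i'\end{vmatrix}=1$. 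Choosing $f,g$ sufficiently flat at $\rho=0$, the form $\rmd\alpha$ descends to a genuine smooth $2$-form on the underlying surface, which vanishes over $B_0$; over each orbifold disc $D_i^2$ one computes its integral as $\tfrac1{\alpha_i}$ times the integral over the $\alpha_i$-fold branched cover $\hat D^2$ of the curvature of the pulled-back connection $\rmd\hat\theta+f(\hat\rho)\,\rmd\hat\psi$, namely $\tfrac1{\alpha_i}(f(1)-f(0))=\beta_i/\alpha_i$. This recovers the formula; the only remaining care is in the orientation conventions, to ensure the sign comes out as $-\sum_i\beta_i/\alpha_i$ rather than its opposite.
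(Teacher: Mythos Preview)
Your argument is correct and follows essentially the same route as the paper: build the generalised surface of section $\Sigma$ by taking $B_0\times\{0\}$ and coning the curves $-q_i$ radially onto the cores $\ell_i=\sigma_i$ (so $\partial\Sigma=\sum_i\beta_i\sigma_i$), then apply Stokes to $\int_\Sigma\rmd\alpha$. The only cosmetic difference is in the evaluation of $\int_{\sigma_i}\alpha$: the paper fixes $\alpha=\rmd\theta/\alpha_i$ near the spine and reads off $1/\alpha_i$ directly, whereas you invoke the period argument $\int_{\ell_i}\alpha=\text{period}(\ell_i)=1/\alpha_i$; these are of course the same computation.

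Your worry about justifying $\int_B\rmd\alpha=\int_\Sigma\rmd\alpha$ is legitimate but does not require the elaborate second construction you give. The paper disposes of it in one line: since $\rmd\alpha$ is basic and $\Int(\Sigma)$ meets every regular fibre exactly once (positively), the integral of $\rmd\alpha$ over $\Int(\Sigma)$ equals the integral of the induced orbifold $2$-form over $B$ minus the singular points, which is $\int_B\rmd\alpha$. Your explicit choice of $\alpha$ (closed on $M_0$, $f(\rho)\,\rmd\psi+g(\rho)\,\rmd\tau$ on $V_i$) and the branched-cover computation are correct and give an independent verification, but they are not needed for the argument to go through.
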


Recall that a \emph{global surface of section} (s.o.s.)\ for the
flow of $X$ is an embedded compact surface $\Sigma\subset M$
whose boundary consists of orbits of~$X$, whose interior $\Int(\Sigma)$
is transverse to $X$, and such that
the flow line of $X$ through any point not on $\partial\Sigma$
hits $\Int(\Sigma)$ in forward and backward time.
We now describe such an s.o.s.\ for the situation at hand.

\begin{proof}[Proof of Proposition~\ref{prop:seifert-euler}]
In $M_0$ we can take $B_0\times\{0\}$ as section. The boundary
of this section consists of the curves $-q_i$, $i=1,\ldots,n$,
which are identified with $-\beta_i'\mu_i+\beta_i\lambda_i$
on $\partial V_i$. Thus, by isotoping these respective curves
radially towards the spine $\sigma_i=\{0\}\times S^1$ of~$V_i$,
we sweep out a surface $\Sigma$ that is not quite an s.o.s.\
in the sense of the definition above, but which has the following
properties:
\begin{itemize}
\item[-] the inclusion $\Sigma\subset M$ is an embedding on 
$\Int(\Sigma)$;
\item[-] the boundary of $\Sigma$ is made up of the curves $\sigma_i$,
each covered $\beta_i$ times;
\item[-] the interior $\Int(\Sigma)$ is intersected positively in
a single point by each $X$-orbit different from the~$\sigma_i$.
\end{itemize}
We now choose a specific connection $1$-form $\alpha$ on $M\rightarrow B$,
i.e.\ a characteristic $1$-form for~$X$. On $V_i=D^2\times S^1$
we write $(r,2\pi\phi)$ for polar coordinates on the $D^2$-factor,
and $\theta\in S^1=\R/\Z$. It follows from (\ref{eqn:gluing})
that on $V_i$ we may assume $X$ to be given by
$-\alpha_i'\partial_{\phi}+\alpha_i\partial_{\theta}$.
So we choose the $1$-form $\alpha$
equal to $\alpha=\rmd\theta/\alpha_i$ near the spine of $V_i$, and
then extend arbitrarily as a connection form over~$M$ (using a partition
of unity).

We then compute
\[ \langle e_X,[B]\rangle  =  -\int_B\rmd\alpha
= -\int_{\Sigma}\rmd\alpha
= -\int_{\partial\Sigma}\alpha
= -\sum_{i=1}^n\frac{\beta_i}{\alpha_i}. \]
Notice that the integral $\int_B\rmd\alpha$ is well defined,
since $\rmd\alpha$ is a basic form.
\end{proof}

\begin{rem}
As in Proposition~\ref{prop:symplectic} one argues that if the
Euler number $e$ of the Seifert fibration is nonzero, one can choose
$\alpha$ as a contact form (defining the correct orientation of~$M$
if $e<0$, the opposite one if $e>0$).
\end{rem}

\begin{cor}
The volume $\volX$ of a vector field $X$ defining a Seifert fibration
on a closed, orientable $3$-manifold, with the regular
fibres having minimal period~$1$,
equals minus the Euler number of that Seifert fibration.
In particular, with $m$ denoting the least common multiple
of the multiplicities $\alpha_1,\ldots,\alpha_n$, we have that
$m\cdot\volX$ is an integer.
\end{cor}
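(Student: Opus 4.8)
The plan is to reduce the statement to Proposition~\ref{prop:seifert-euler} by means of a fibre-integration identity, thereby extending the computation of $\volX$ from free $S^1$-actions (Proposition~\ref{prop:volume-basic}) to locally free ones. Fix a characteristic $1$-form $\alpha$ for~$X$; since $\dim M=3$, the definition of $\volX$ reads $\volX=\int_M\alpha\wedge\rmd\alpha$. The crux is to establish
\[ \int_M\alpha\wedge\rmd\alpha=\int_B\rmd\alpha, \]
the right-hand integral being the one already occurring in the proof of Proposition~\ref{prop:seifert-euler}, and well defined because $\rmd\alpha$ is basic.

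To prove this identity I would work over the complement $B^\circ\subset B$ of the finitely many orbifold points. Over $B^\circ$ the Seifert fibration restricts to a genuine principal $\R/\Z$-bundle $M^\circ\to B^\circ$, and $\alpha|_{M^\circ}$ is a connection $1$-form for it, since $\alpha(X)=1$ and the regular fibres have minimal period~$1$. The basic form $\rmd\alpha$ is then the pull-back of a $2$-form on $B^\circ$, so fibre integration over the circle fibres --- over each of which $\alpha$ integrates to~$1$ --- turns $\alpha\wedge\rmd\alpha$ into $\rmd\alpha$ on $B^\circ$, giving $\int_{M^\circ}\alpha\wedge\rmd\alpha=\int_{B^\circ}\rmd\alpha$. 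Since the singular fibres form a set of measure zero in $M$ and the orbifold points a set of measure zero in $B$, the two sides coincide with $\int_M\alpha\wedge\rmd\alpha$ and $\int_B\rmd\alpha$ respectively. Equivalently, this says that the pairing $[\alpha]_X\bullet e_X$ of Proposition~\ref{prop:pairing}\,(a) agrees with the Kronecker pairing $\langle e_X,[B]\rangle$, which is exactly the ingredient behind Proposition~\ref{prop:volume-basic}; the only new point is that local freeness, rather than freeness, suffices.

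Granting the identity and using $e_X=-[\rmd\alpha]_{\mathrm{B}}$, one obtains $\volX=\int_B\rmd\alpha=-\langle e_X,[B]\rangle$, and Proposition~\ref{prop:seifert-euler} together with the formula $e=-\sum_{i=1}^n\beta_i/\alpha_i$ for the Euler number gives $\langle e_X,[B]\rangle=e$; hence $\volX=-e$. For the final clause, $m\cdot e=-\sum_{i=1}^n(m/\alpha_i)\beta_i$ is an integer because $\alpha_i\mid m$ for every~$i$, so $m\cdot\volX=-m\cdot e\in\Z$. The one step that genuinely requires attention is the fibre-integration identity, and even there the content is essentially bookkeeping: checking that the orientations of $M$, of $B$ and of the fibre $\R/\Z$ are matched so that each regular fibre contributes $+1$, and that deleting the singular fibres affects neither integral.
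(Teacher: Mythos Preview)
Your proof is correct and essentially the same as the paper's: both remove the singular fibres (a set of measure zero) and then integrate over the circle fibres to reduce $\int_M \alpha\wedge\rmd\alpha$ to $\int_B \rmd\alpha = \int_\Sigma \rmd\alpha$, which was computed as $-e$ in Proposition~\ref{prop:seifert-euler}. The only cosmetic difference is that the paper carries out the fibre integration via the explicit trivialisation $\Int(\Sigma)\times S^1$ furnished by the global surface of section $\Sigma$ constructed in that proof, whereas you invoke the abstract push-forward along the principal $S^1$-bundle $M^\circ\to B^\circ$; the content is identical.
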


\begin{proof}
The value of the integral of $\alpha\wedge\rmd\alpha$ over $M$ does not
change when we remove the singular fibres of the Seifert fibration. But then
the integral equals
\[ \int_{\Int(\Sigma)\times S^1}\alpha\wedge\rmd\alpha=\int_{\Sigma}\rmd\alpha
=-e.\qed\]
\renewcommand{\qed}{}
\end{proof}

\begin{rem}
The integrality statement has been observed in greater
generality by Weinstein~\cite{wein77}.
\end{rem}

\begin{ex}
The positive Hopf fibration
\[ \begin{array}{rcccl}
\C^2\;\supset & S^3       & \longrightarrow & \CP^1 & =\; S^2\\
            & (z_1,z_2) & \longmapsto     & [z_1:z_2]
\end{array}\]
is given by the vector field
$X=2\pi(\partial_{\varphi_1}+\partial_{\varphi_2})$ of period~$1$,
where $\varphi_1,\varphi_2\in\R/2\pi\Z$.
The corresponding connection $1$-form is
\[ \alpha=\frac{1}{2\pi}\,\bigl(r_1^2\,\rmd\varphi_1
+r_2^2\,\rmd\varphi_2\bigr).\]
With $r^2=r_1^2+r_2^2$ one computes
\[ r\,\rmd r\wedge\alpha\wedge\rmd\alpha=\frac{1}{2\pi^2}
(r_1^2+r_2^2)\cdot (r_1\,\rmd r_1\wedge\rmd\varphi_1\wedge r_2\,\rmd r_2
\wedge\rmd\varphi_2).\]
So along the unit sphere $S^3=\{r=1\}$, the $3$-form $\alpha\wedge\rmd\alpha$
restricts to the standard volume form up to a factor $1/2\pi^2$, hence
\[ \int_{S^3}\alpha\wedge\rmd\alpha=\frac{1}{2\pi^2}\mathrm{Vol}(S^3)=1.\]

A section of the Hopf fibration over $\C\cong\CP^1\setminus\{[0:1]\}$
is defined by
\[ r\rme^{\rmi\varphi}\longmapsto \bigl[1:\rme^{\rmi\varphi}\bigr]
\longmapsto \Bigl(\frac{1}{\sqrt{1+r^2}},
\frac{r\rme^{\rmi\varphi}}{\sqrt{1+r^2}}\Bigr).\]
Under this map, $\rmd\alpha$ pulls back to
\[ \frac{1}{\pi}\cdot\frac{r}{(1+r^2)^2}\,\rmd r\wedge\rmd\varphi.\]
This yields
\[ \int_{S^2}\rmd\alpha=\frac{1}{\pi}\,\int_0^{2\pi}\int_0^{\infty}
\frac{r}{(1+r^2)^2}\,\rmd r\,\rmd\varphi=1.\]

Thus, both computations confirm that the positive Hopf fibration has Euler
number $e=-1$, see also \cite[Lemma~2.2]{agz19}.
\end{ex}
\section{The theorems of Gau{\ss}--Bonnet and Poincar\'e--Hopf}
\label{section:gauss}
In this section we formulate and prove the theorems of Gau{\ss}--Bonnet
and Poin\-car\'e--Hopf for oriented $2$-dimensional orbifolds, using
an s.o.s.\ argument as in the preceding section. Versions of these
theorems for higher-dimen\-sio\-nal orbifolds (including those with boundary)
can be found in \cite{sata57} and~\cite{seat08}. In order to avoid confusion
with formulas found elsewhere, in this section we follow the
usual convention that the regular fibres in the unit tangent bundle of
a $2$-dimensional orbifold have length~$2\pi$.

Thus, let $B$ be a closed, oriented $2$-dimensional Riemannian orbifold
with underlying surface of genus $g$ and $n$ cone points of multiplicities
$\alpha_1,\ldots,\alpha_n$. There is
a well-defined unit tangent bundle $STB$, cf.~\cite{gego12}, which is
a Seifert manifold with invariants
\[ \bigl(g,(1,2g-2),(\alpha_1,\alpha_1-1),\ldots,(\alpha_n,\alpha_n-1\bigr).\]
The orbifold Euler characteristic $\chi_{\mathrm{orb}}(B)$
is the Euler number of the Seifert fibration
$\pi\co STB\rightarrow B$, so by Proposition~\ref{prop:seifert-euler} we
have
\[ \chi_{\mathrm{orb}}(B)=2-2g-n+\sum_{i=1}^n\frac{1}{\alpha_i}.\]
This formula can also be derived combinatorially, using the
Riemann--Hurwitz formula for coverings, see~\cite[p.~427]{scot83}.

Just like the unit tangent bundle of a smooth
surface, the unit tangent bundle $STB$ of an orbifold
admits a pair of Liouville--Cartan forms $\lambda_1,\lambda_2$ and
a connection $1$-form $\tilde{\alpha}$ satisfying the structure equations
\begin{eqnarray*}
\rmd\lambda_1      & = & -\lambda_2\wedge\tilde{\alpha},\\
\rmd\lambda_2      & = & -\tilde{\alpha}\wedge\lambda_1,\\
\rmd\tilde{\alpha} & = & -(\pi^*K)\lambda_1\wedge\lambda_2,
\end{eqnarray*}
where $K$ is the Gau{\ss} curvature of the Riemannian
metric on~$B$. See \cite[Section~2.1]{agz18}
for the surface case, and \cite[Section~7]{gego95} for a discussion
of Liouville--Cartan forms for orbifolds.

\begin{thm}[Gau{\ss}--Bonnet]
The total curvature of a closed, oriented
$2$-dimen\-sio\-nal Riemannian orbifold $B$ equals
\[ \int_B K\,\rmd A=2\pi\chi_{\mathrm{orb}}(B).\]
\end{thm}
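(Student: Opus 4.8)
The plan is to read off the Gau{\ss}--Bonnet formula from the third structure equation, using the identification of the total curvature with a multiple of the Euler number of the Seifert fibration $\pi\co STB\to B$ obtained in Section~\ref{section:seifert}. First I would note that $\lambda_1\wedge\lambda_2$ is a basic $2$-form on $STB$: it is annihilated by the fibre direction, by the definition of the Liouville--Cartan forms, and the first two structure equations give $\rmd(\lambda_1\wedge\lambda_2)=(-\lambda_2\wedge\tilde{\alpha})\wedge\lambda_2-\lambda_1\wedge(-\tilde{\alpha}\wedge\lambda_1)=0$. Moreover $\lambda_1\wedge\lambda_2=\pi^*(\rmd A)$, since on any horizontal $2$-plane the pair $\lambda_1,\lambda_2$ restricts to the coframe dual to a positively oriented orthonormal frame of the corresponding tangent plane of~$B$. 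Hence the third structure equation says precisely that $\rmd\tilde{\alpha}$ is a basic $2$-form on $STB$ that pushes down to $-K\,\rmd A$ on~$B$.

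Next I would connect $\tilde{\alpha}$ to the Euler class $e_X$ introduced earlier. In the normalisation implicit in the structure equations, the unit vector field $R$ generating the fibre rotation of $STB$ satisfies $\tilde{\alpha}(R)=1$, and with the convention of this section its regular period is $2\pi$; consequently $X:=2\pi R$ generates the $S^1$-action of regular period~$1$ whose orbits are the fibres of $\pi\co STB\to B$, and $\frac{1}{2\pi}\tilde{\alpha}$ is a characteristic $1$-form for~$X$. Therefore $e_X=-\frac{1}{2\pi}[\rmd\tilde{\alpha}]_{\mathrm{B}}$. On the other hand $\chi_{\mathrm{orb}}(B)$ is, by definition, the Euler number of the Seifert fibration $\pi\co STB\to B$, so Proposition~\ref{prop:seifert-euler} (as already used above to compute $\chi_{\mathrm{orb}}(B)$ from the Seifert invariants) yields $\langle e_X,[B]\rangle=\chi_{\mathrm{orb}}(B)$.

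It remains to pair $e_X$ with $[B]$, that is, to push the basic form $\rmd\tilde{\alpha}$ down to $B$ and integrate, which is legitimate for the same reason as in the proof of Proposition~\ref{prop:seifert-euler}. Using the first step,
\[ \chi_{\mathrm{orb}}(B)=\langle e_X,[B]\rangle=-\frac{1}{2\pi}\int_B\rmd\tilde{\alpha}=\frac{1}{2\pi}\int_B K\,\rmd A,\]
and rearranging gives the theorem. The only point requiring genuine care is the bookkeeping of the factor $2\pi$, which enters through the choice of fibre length $2\pi$ (equivalently, through $\tilde{\alpha}(R)=1$) and through the identity $\lambda_1\wedge\lambda_2=\pi^*(\rmd A)$; everything else is formal. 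Alternatively, one could evaluate $\int_B\rmd\tilde{\alpha}$ directly by means of a global surface of section in $STB$, exactly as in the proof of Proposition~\ref{prop:seifert-euler}, but this merely re-derives the Euler number and is more cumbersome.
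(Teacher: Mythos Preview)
Your proof is correct and follows essentially the same approach as the paper: identify $\alpha=\tilde{\alpha}/2\pi$ as the characteristic $1$-form for the fibre vector field of period~$1$, use the third structure equation to relate $\rmd\tilde{\alpha}$ to $K\,\rmd A$, and invoke Proposition~\ref{prop:seifert-euler} to equate the resulting integral with $2\pi\chi_{\mathrm{orb}}(B)$. The only cosmetic difference is that the paper writes the key integral over the surface of section $\Sigma$ from Proposition~\ref{prop:seifert-euler} rather than over~$B$, whereas you integrate the basic form directly over~$B$; as you yourself note, these are the same computation.
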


\begin{proof}
The characteristic $1$-form $\alpha$ for the vector field $X$ that makes the
regular fibres of $STB$ of length $1$ is $\alpha=\tilde{\alpha}/2\pi$.
Therefore, with $e=\chi_{\mathrm{orb}}(B)$ we obtain
\[ \int_B K\,\rmd A=-\int_{\Sigma}\rmd\tilde{\alpha}=
-2\pi\int_{\Sigma}\rmd\alpha=2\pi\chi_{\mathrm{orb}}(B),\]
where $\Sigma$ is as in the proof of Proposition~\ref{prop:seifert-euler}.
\end{proof}

Now let $Y$ be a vector field with isolated zeros
on the orbifold~$B$. In order to
formulate the Poincar\'e--Hopf theorem we need to give a definition
of the index $\ind_{p}Y$ in an orbifold singularity $p\in B$,
cf.~\cite[Section~3.2]{sata57}.
First, let $p\in B$ be a smooth point where $Y$ has a zero. Choose
a small disc $D_{\varepsilon}(p)\subset B$ not containing
other zeros of~$Y$ (and hence, as we shall see,
in particular no orbifold points of~$B$). Choose
a trivialisation $TD_{\varepsilon}(p)\cong D_{\varepsilon}(p)\times\R^2$.
Then $\ind_pY$ is the degree of the map $\partial D_{\varepsilon}(p)
\rightarrow S^1$, $x\mapsto Y(x)/|Y(x)|$.

When the zero of $Y$ happens to be an orbifold singularity $p_i\in B$
of order~$\alpha_i$, we consider a local description $\pi_{\alpha_i}\co
D^2\rightarrow D^2/\Z_{\alpha_i}\cong D^2$ of the singularity, where the
cyclic group $\Z_{\alpha_i}$ is generated by the rotation about $0\in\R^2$
through an angle $2\pi/\alpha_i$.

We drop the index $i$ for the time being; there should
be little grounds for confusing $\alpha$ in the following discussion
with the connection $1$-form.

The fibre of $STB$ over the singular point $p\in B$ has Seifert
invariants $(\alpha,\beta=\alpha-1)$, so we may take
$\alpha'=\beta'=1$. Then, cf.~\cite{gela18},
the local description of the fibration $STB\rightarrow B$ near
the orbifold point $p$ is given by
\[ \begin{array}{rccc}
\pi\co & D^2\times S^1                                     & \longrightarrow
  & D^2\\
       & \bigl(r\rme^{\rmi\varphi},\rme^{\rmi\theta}\bigr) & \longmapsto
  & r\rme^{\rmi(\alpha\varphi+\theta)},
\end{array} \]
where we identify $p$ with $0\in D^2$.
Notice that the fibres of $\pi$ are described by $\alpha\varphi+\theta
=\mathrm{const.}$, or in parametric form as
\[ t\longmapsto \bigl(\varphi(t),\theta(t)\bigr)=
(\varphi_0-t,\theta_0+\alpha t).\]
This accords with (\ref{eqn:gluing}).

Now consider the following commutative diagram,
\begin{diagram}
D^2\times S^1 & \ni & \bigl(r\rme^{\rmi\tphi},\rme^{\rmi\ttheta}\bigr)
&& \rMapsto^{\tpi_{\alpha}} 
&& \bigl(r\rme^{\rmi\varphi},\rme^{\rmi\theta}\bigr) & \in &  D^2\times S^1\\
\dTo^{\tpi} && \dMapsto && && \dMapsto && \dTo_{\pi}\\
D^2 & \ni           & r\rme^{\rmi(\tphi+\ttheta)}
&& \rMapsto^{\pi_{\alpha}}
&& r\rme^{\rmi(\alpha\varphi+\theta)} & \in & D^2,
\end{diagram}
where the quotient map
$\pi_{\alpha}$ under the $\Z_{\alpha}$-action is
given by
\[ \pi_{\alpha}(r\rme^{\rmi t})=r\rme^{\rmi\alpha t},\]
its lift $\tpi_{\alpha}$ to the unit tangent bundle by
\[ \tpi_{\alpha}\co (\tphi,\ttheta)\longmapsto (\varphi,\theta)=(\tphi,
\alpha\ttheta).\]

Up to homotopy, the section $Y/|Y|$ of $\pi$ over $\partial D^2$
is of the form
\begin{equation}
\label{eqn:curve}
\bigl(\varphi(t),\theta(t)\bigr)=\bigl(2\pi kt,2\pi(1-k\alpha)t\bigr),
\;\;\; t\in [0,1],
\end{equation}
for some $k\in\Z$; notice that $\alpha\varphi(t)+\theta(t)$ goes
from $0$ to~$2\pi$ as $t$ goes from $0$ to~$1$.
The lift of the $\alpha$-fold traversal of this curve under the map
$\tpi_{\alpha}$ is described by
\begin{equation}
\label{eqn:liftedcurve}
\bigl(\tphi(t),\ttheta(t)\bigr)=\bigl(2\pi kt,
2\pi\frac{1-k\alpha}{\alpha}\,t\bigr),
\;\;\; t\in [0,\alpha];
\end{equation}
here $\tphi(t)+\ttheta(t)=\frac{2\pi}{\alpha}\,t$ goes from $0$ to $2\pi$
as $t$ goes from $0$ to~$\alpha$.

The fibres of $\tpi$ are described by $\tphi+\ttheta=\mathrm{const.}$,
and a single right-handed Dehn twist along a meridional disc
of $D^2\times S^1$,
\[ (\tphi',\ttheta'):=(\tphi+\ttheta,\ttheta),\]
will bring these fibres into the form $\tphi'=\tphi_0'$, and
the curve (\ref{eqn:liftedcurve}) becomes
\[ \bigl(\tphi'(t),\ttheta'(t)\bigr)=\Bigl(\frac{2\pi}{\alpha}\, t,
2\pi\frac{1-k\alpha}{\alpha}\,t\Bigr),
\;\;\; t\in [0,\alpha].\]

The index $\ind_p Y$ at an orbifold point of multiplicity $\alpha$
is defined as $\ind_{\tilde{p}}\widetilde{Y}/\alpha$, where $\tilde{p}=
\pi_{\alpha}^{-1}(p)$ and $\widetilde{Y}$ is the lifted vector field.
Our considerations show that, in dependence on~$k\in\Z$, this index is
\[ \ind_p Y=\frac{1}{\alpha}-k.\]

Notice that $k=0$ corresponds to a rotationally symmetric source or
sink of~$Y$, which lifts to an identically looking zero of~$\widetilde{Y}$.
Also, for $\alpha>1$ the term $1-k\alpha$ in the
$\theta$-component of (\ref{eqn:curve}) never equals zero,
no matter what $k\in\Z$, which means that orbifold points
always must be zeros of~$Y$.

\begin{thm}[Poincar\'e--Hopf]
Let $Y$ be a vector field with isolated zeros on a closed, oriented
$2$-dimensional orbifold~$B$.
Then
\[ \sum_{\substack{p\in B\\Y(p)=0}}\ind_pY=\chi_{\mathrm{orb}}(B).\]
\end{thm}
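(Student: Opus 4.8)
The plan is to reduce the statement to the already-proved Gau{\ss}--Bonnet theorem for orbifolds, by showing that the left-hand side is independent of the choice of vector field~$Y$ with isolated zeros, and then evaluating it on one convenient choice. First I would fix a Riemannian metric on $B$ and recall that, by the structure equations above, the connection $1$-form $\tilde{\alpha}$ on $STB$ has $\rmd\tilde{\alpha}=-(\pi^*K)\lambda_1\wedge\lambda_2$, so that $-\frac{1}{2\pi}\int_\Sigma\rmd\tilde{\alpha}=\chi_{\mathrm{orb}}(B)$ by the preceding theorem, where $\Sigma$ is the partial surface of section of Proposition~\ref{prop:seifert-euler}. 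The key point is that a vector field $Y$ with isolated zeros, away from those zeros, is a section of the sphere bundle $STB\rightarrow B$ over $B\setminus\{Y=0\}$; integrating $\frac{1}{2\pi}\tilde{\alpha}$ over the boundary of a tubular neighbourhood of the zero set and using Stokes compares $\int_B K\,\rmd A/2\pi$ with $\sum_p \ind_p Y$.

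Concretely, let $p_1,\dots,p_N$ be the zeros of~$Y$ (including, by the remark above, all orbifold points of multiplicity $>1$), and for small $\varepsilon$ let $U_j$ be disjoint $\varepsilon$-discs about~$p_j$. Over $B\setminus\bigcup U_j$ the normalised section $s=Y/|Y|$ lifts $B\setminus\bigcup U_j$ into $STB$; pulling back $\frac{1}{2\pi}\tilde{\alpha}$ and the structure equation and applying Stokes gives
\[
\frac{1}{2\pi}\int_{B\setminus\bigcup U_j} K\,\rmd A
=-\sum_{j}\frac{1}{2\pi}\int_{\partial U_j}s^*\tilde{\alpha},
\]
with the boundary circles oriented as boundaries of the~$U_j$. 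At a smooth zero $p_j$, the integral $-\frac{1}{2\pi}\int_{\partial U_j}s^*\tilde{\alpha}$ computes the winding number of $Y$ relative to a flat trivialisation, i.e.\ $\ind_{p_j}Y$; letting $\varepsilon\to0$ the left side tends to $\frac{1}{2\pi}\int_B K\,\rmd A=\chi_{\mathrm{orb}}(B)$. At an orbifold point $p_j$ of multiplicity $\alpha=\alpha_j$, one pulls the picture up by $\tilde{\pi}_\alpha$: the section $Y/|Y|$ over $\partial U_j$ is, up to homotopy, the curve (\ref{eqn:curve}), its $\alpha$-fold lift is (\ref{eqn:liftedcurve}), and the Dehn-twist normal form computed just above identifies $\frac{1}{\alpha}\ind_{\tilde p_j}\widetilde Y=\frac1\alpha-k$ with $-\frac{1}{2\pi}\int_{\partial U_j}s^*\tilde\alpha$ — here the factor $1/\alpha$ enters because the connection form $\tilde\alpha$ downstairs pulls back to $\alpha$ times the corresponding form in the lifted coordinates $(\tilde\varphi,\tilde\theta)$, while the boundary circle is traversed once downstairs and $\alpha$ times upstairs. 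Summing all local contributions yields $\sum_j\ind_{p_j}Y=\chi_{\mathrm{orb}}(B)$.

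The main obstacle is the careful bookkeeping at the orbifold points: one must match the normalisation of $\tilde{\alpha}$ (regular fibres of length~$2\pi$), the factor~$\alpha$ relating the local connection forms upstairs and downstairs, and the $\alpha$-fold covering of the boundary circle, so that the boundary integral over $\partial U_j$ comes out exactly as $\frac1\alpha-k=\ind_{p_j}Y$ rather than off by a factor of~$\alpha$ or a sign. All of this has essentially been set up in the paragraphs preceding the theorem via the diagram with $\tilde\pi_\alpha$, the curves (\ref{eqn:curve}) and (\ref{eqn:liftedcurve}), and the Dehn-twist normal form; what remains is to splice that local computation into the global Stokes argument. An alternative, perhaps cleaner, route is to invoke homotopy invariance of $\sum_p\ind_pY$ directly — any two such $Y$ differ by a compactly supported homotopy away from a common finite set, under which each local index is unchanged — and then simply exhibit one $Y$, e.g.\ the gradient of a Morse function adapted to the orbifold, for which the identity is the combinatorial Euler-characteristic count $2-2g-n+\sum 1/\alpha_i=\chi_{\mathrm{orb}}(B)$; but the Stokes argument above has the advantage of deriving Poincar\'e--Hopf in the same spirit as Gau{\ss}--Bonnet, which is the stated aim of the section.
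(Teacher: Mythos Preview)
Your argument is correct and very close to the paper's; both start from the same object --- the section $s=Y/|Y|$ of $STB$ over $B$ minus small discs about the zeros --- and both apply Stokes, using the local analysis in the paragraphs before the theorem (the curves (\ref{eqn:curve}), (\ref{eqn:liftedcurve}) and the Dehn-twist normal form) to identify the boundary contributions with the indices at the orbifold points. The one genuine difference is in how the interior integral is tied to $\chi_{\mathrm{orb}}(B)$. You pull back to the base, invoke the structure equation $\rmd\tilde{\alpha}=-(\pi^*K)\lambda_1\wedge\lambda_2$ so that the integrand becomes $-K\,\rmd A$, and then quote the already-proved Gau{\ss}--Bonnet theorem. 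The paper instead stays upstairs: it extends $\Sigma^Y_0:=s(B\setminus\bigcup U_j)$ to a full surface of section $\Sigma^Y\subset STB$ whose boundary consists of multiple covers of the fibres over the zeros, and uses that $\rmd\alpha$ is basic, so $-\int_{\Sigma^Y}\rmd\alpha$ equals the Euler number $e=\chi_{\mathrm{orb}}(B)$ directly (exactly as in the proof of Proposition~\ref{prop:seifert-euler}), without passing through the curvature. Your route is the classical one and makes the dependence on Gau{\ss}--Bonnet explicit; the paper's route keeps the surface-of-section viewpoint uniform across both theorems and avoids the curvature identity altogether. (A minor point: the sign in your displayed Stokes identity and in the subsequent claim about the boundary integral are each off by a sign, but consistently so, so the conclusion is unaffected --- this is precisely the bookkeeping you flag as the main obstacle.)
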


\begin{proof}
The idea is simply to compute the Euler number $e=\chi_{\mathrm{orb}}(B)$
of the Seifert fibration $STB\rightarrow B$ with the help of
an s.o.s.\ $\Sigma^{Y}$ adapted to~$Y$.

Outside small disc neighbourhoods of the zeros of $Y$ we may normalise the
vector field and regard it as a section $\Sigma^Y_0$ of $STB\rightarrow B$
outside this set of discs in~$B$.
This surface $\Sigma^Y_0$ extends to an s.o.s.\ $\Sigma^Y$ of $STB$,
with boundary components certain multiple covers of the
fibres over the zeros $p$ of~$Y$, as in the proof of
Proposition~\ref{prop:seifert-euler}. The multiplicity
of the covering is determined by the number of full turns
the boundary component makes in the $\theta$-direction. Notice that
the orientation of the collection of circles $\pi(\partial\Sigma^Y_0)$
is the opposite of the orientation as boundaries of the removed discs.
Thus, the multiplicity is $-\ind_pY$ at a smooth point and,
by~(\ref{eqn:liftedcurve}), equal to
$-(1-k_i\alpha_i)=-\alpha_i\,\ind_pY$ at an orbifold point
of order~$\alpha_i$, where $k_i\in\Z$ is the integer describing
that particular zero of~$Y$, see also Remark~\ref{rem:intersection}.

With a connection $1$-form $\alpha$ corresponding to regular fibres
having length~$1$ as in the proof of
Proposition~\ref{prop:seifert-euler},
that is, equal to $\rmd\theta/2\pi$ near the fibres over smooth zeros
of~$Y$, and equal to $\rmd\theta/2\pi\alpha_i$ over an orbifold
point of order~$\alpha_i$, we have
\[ \chi_{\mathrm{orb}}(B)=e=-\int_{\Sigma^Y}\rmd\alpha
=-\int_{\partial\Sigma^Y}\alpha
=\sum_{\substack{p\in B\\Y(p)=0}}\ind_pY.\qed\]
\renewcommand{\qed}{}
\end{proof}

\begin{rem}
\label{rem:intersection}
It may be helpful to reformulate the first part of the proof
in terms of meridians and longitudes, similar to the
discussion of the topology of surfaces of section in~\cite{agz19}.

First, consider a zero
of $Y$ at a smooth point $p\in B$. Let $V$ be a tubular
neighbourhood of the fibre $ST_pB$. Let $\mu$ be the meridian on
$\partial V$, and $\lambda$ the longitude determined by
the parallel fibres. We orient $\lambda$ as the fibres, and $\mu$
in such a way that $(\mu,\lambda)$ gives the positive orientation
of $\partial V$. The component of $\partial\Sigma_0^Y$ on
$\partial V$ is $(-1,-\ind_pY)$ in terms of the $(\mu,\lambda)$-basis.
So this component is isotopic to $-\ind_pY$ times the spine of~$V$.
Also, notice that the intersection number of the fibre $(0,1)$
with $(-1,-\ind_pY)$ is~$+1$, which is consistent with $\Sigma_0^Y$ being
a section.

For a zero of $Y$ at a singular point of order~$\alpha_i$,
we take a neighbourhood $V_i$ of $ST_{p_i}B$ with $\mu_i,\lambda_i$
as in Section~\ref{section:seifert}. Now the component of
$\partial\Sigma_0^Y$ on $\partial V_i$ is $(-k_i,-1+k_i\alpha_i)$
by~(\ref{eqn:curve}), which is isotopic to $-1+k_i\alpha_i$ times the spine.
Again, the intersection of the fibre $(-1,\alpha)$, see~(\ref{eqn:gluing}),
with $(-k_i,-1+k_i\alpha_i)$ is~$+1$.
\end{rem}
\section{Transversely holomorphic foliations and the Bott invariant}
\label{section:transhol}
In \cite{gego16} with Jes\'us Gonzalo we proved a generalised
Gau{\ss}--Bonnet theorem
for transversely holomorphic $1$-dimensional foliations on $3$-manifolds.
In certain situations, which I am going to describe now, this can
be interpreted as a statement about $\volX$ for a vector field $X$ whose
flow defines such a foliation.

The following definition is from~\cite{gego95}.

\begin{defn}
A pair of contact forms $(\omega_1,\omega_2)$ on a closed, oriented
$3$-manifold $M$
is called a \emph{Cartan structure} if
\[ \begin{array}{ccccc}
\omega_1\wedge\rmd\omega_1 & = & \omega_2\wedge\rmd\omega_2 & \neq & 0\\
\omega_1\wedge\rmd\omega_2 & = & \omega_2\wedge\rmd\omega_1 & =    & 0.
\end{array} \]
\end{defn}

Such structures exist in abundance, see~\cite[Theorem~1.2]{gego95}.
They are special cases of what we christened taut contact circles
in that paper: any linear combination $\lambda_1\omega_1+\lambda_2\omega_2$
with $(\lambda_1,\lambda_2)\in S^1\subset\R^2$ is again a contact form
defining the same volume form.
The defining equations for a Cartan structure can be rephrased as
saying that there is a uniquely defined nowhere vanishing
$1$-form $\alpha$ such that
\begin{eqnarray*}
\rmd\omega_1 & = & \omega_2\wedge\alpha,\\
\rmd\omega_2 & = & \alpha\wedge\omega_1.
\end{eqnarray*}
In terms of the complex-valued $1$-form $\omegac:=\omega_1+\rmi\,\omega_2$,
these equations can be rewritten as
\[ \rmd\omegac=\rmi\,\alpha\wedge\omegac.\]
Observe that
\[ 0\neq\omega_1\wedge\rmd\omega_1=\omega_1\wedge\omega_2\wedge\alpha,\]
so $\alpha$ is nonzero on the common kernel of $\omega_1$
and~$\omega_2$.

\begin{lem}
\label{lem:cartan}
Let $X$ be the vector field defined by $X\in\ker\omega_1\cap\ker\omega_2$
and $\alpha(X)=1$. Then $i_X\rmd\alpha=0$.
Hence, by Proposition~\ref{prop:wadsley}, $X$ is geodesible.
\end{lem}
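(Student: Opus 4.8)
The goal is to show $i_X\rmd\alpha=0$, where $X$ is characterised by $X\in\ker\omega_1\cap\ker\omega_2$ together with the normalisation $\alpha(X)=1$ (recall $\alpha$ is nonzero on $\ker\omega_1\cap\ker\omega_2$, so this $X$ exists and is uniquely determined). My plan is to exploit the structure equations $\rmd\omega_1=\omega_2\wedge\alpha$ and $\rmd\omega_2=\alpha\wedge\omega_1$ and to extract the desired information by differentiating them. The $2$-form $\rmd\alpha$ is determined by its values on the three ``coordinate'' $2$-planes spanned by $X$ and vectors in the common kernel $\xi:=\ker\omega_1\cap\ker\omega_2$; since $i_X\rmd\alpha$ is a $1$-form, it is enough to check that $\rmd\alpha(X,Y)=0$ for every $Y\in\xi$, i.e.\ that $i_X\rmd\alpha$ annihilates $\xi$, and also that $i_X\rmd\alpha(X)=0$, which is automatic by antisymmetry.

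First I would apply $\rmd$ to the equation $\rmd\omega_1=\omega_2\wedge\alpha$, obtaining $0=\rmd\omega_2\wedge\alpha-\omega_2\wedge\rmd\alpha=(\alpha\wedge\omega_1)\wedge\alpha-\omega_2\wedge\rmd\alpha=-\omega_2\wedge\rmd\alpha$, so $\omega_2\wedge\rmd\alpha=0$; symmetrically, differentiating $\rmd\omega_2=\alpha\wedge\omega_1$ gives $\omega_1\wedge\rmd\alpha=0$. Contracting $\omega_1\wedge\rmd\alpha=0$ with $X$ and using $\omega_1(X)=0$ yields $\omega_1\wedge i_X\rmd\alpha=0$; likewise $\omega_2\wedge i_X\rmd\alpha=0$. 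Since $\omega_1,\omega_2$ are pointwise linearly independent on $M$ (their common kernel is a line, and $\omega_1\wedge\omega_2\wedge\alpha\neq 0$), a $1$-form $\beta$ with $\omega_1\wedge\beta=\omega_2\wedge\beta=0$ must be a functional multiple of a $1$-form vanishing on both $\ker\omega_1$ and $\ker\omega_2$ — but that intersection of kernels is all of $\xi\oplus 0$ only on a $1$-dimensional space, so such a $\beta$ must be proportional to $\alpha$ at each point: $i_X\rmd\alpha=g\,\alpha$ for some $g\in C^\infty(M)$.

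It then remains to show $g=0$, which I would get by pairing with $X$: $g=g\,\alpha(X)=(i_X\rmd\alpha)(X)=\rmd\alpha(X,X)=0$. This closes the argument. The one step needing a little care — and the likeliest source of a slip — is the linear-algebra fact that $\omega_1\wedge\beta=\omega_2\wedge\beta=0$ forces $\beta\parallel\alpha$; concretely, writing $\beta=a\alpha+b\omega_1+c\omega_2$ in a pointwise coframe $(\alpha,\omega_1,\omega_2)$, the two wedge conditions read $b\,\omega_1\wedge\omega_2+\text{(terms with }\alpha)=0$ and similarly, forcing $b=c=0$. With $g=0$ in hand the lemma follows, and geodesibility of $X$ is then immediate from the equivalence of (i) and (iii) in Proposition~\ref{prop:wadsley}.
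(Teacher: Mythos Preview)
Your proof is correct and follows the same route as the paper: differentiate the structure equations to obtain $\omega_1\wedge\rmd\alpha=\omega_2\wedge\rmd\alpha=0$, then finish by a linear-algebra argument exploiting the pointwise independence of $\omega_1$ and~$\omega_2$. One minor remark on the step you flagged: in your coframe computation the ``terms with~$\alpha$'' in $\omega_j\wedge\beta=0$ already force $a=0$ as well, so the two wedge conditions give $i_X\rmd\alpha=0$ outright (the paper phrases this as $i_X\rmd\alpha$ being a multiple of both $\omega_1$ and~$\omega_2$, hence zero), which makes your final evaluation at~$X$ correct but redundant.
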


\begin{proof}
By taking the exterior derivative of the defining equations for $\alpha$
we find
\[ 0=\rmd^2\omega_1=\rmd\omega_2\wedge\alpha-\omega_2\wedge\rmd\alpha=
-\omega_2\wedge\rmd\alpha,\]
and similarly
\[ 0=\rmd\alpha\wedge\omega_1.\]
This implies that $i_X\rmd\alpha$ must be a multiple both of
$\omega_1$ and~$\omega_2$, but these forms are pointwise
linearly independent.
\end{proof}

The $1$-form $\omegac$ is formally integrable in the sense that
$\omegac\wedge\rmd\omegac=0$. In \cite{gego16} it is shown that
this is equivalent to saying that $\omegac$ defines a transverse
holomorphic structure for the $1$-dimensional
foliation defined by the flow of~$X$. 

In general, the formal integrability of a complex-valued $1$-form
$\omegac$ only implies the existence of a (not
uniquely defined) \emph{complex-valued}
$1$-form $\alphac$ such that
\[ \rmd\omega_c=\alphac\wedge\omegac.\]
A Godbillon--Vey type argument shows that
the complex number
\[ \int_M\alphac\wedge\rmd\alphac,\]
called the \emph{Bott invariant},
is an invariant of the transversely holomorphic foliation that does not
depend on the choice of $\omegac$ and $\alphac$.

The generalised Gau{\ss}--Bonnet theorem \cite[Theorem~3.3]{gego16}
says that for transversely holomorphic foliations coming from
a Cartan structure, this Bott invariant depends only
on the $1$-dimensional foliation defined by the common kernel flow,
not on the specific transverse holomorphic structure.
As observed before, for $\omegac$ coming from a Cartan structure we
can take $\alphac=\rmi\alpha$. Thus, the generalised
Gau{\ss}--Bonnet theorem from \cite{gego16} can be rephrased
as follows.

\begin{thm}
If the vector field $X$ derives from a Cartan structure as described, then
$\volX$ equals the negative of the Bott invariant of any transversely
holomorphic structure on the foliation $\langle X\rangle$.\qed
\end{thm}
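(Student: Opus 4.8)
The plan is to exhibit one transversely holomorphic structure on $\FF=\langle X\rangle$ for which the Bott invariant can be read off directly from the characteristic $1$-form $\alpha$, and then to propagate the resulting value to all transversely holomorphic structures on $\FF$ by invoking the generalised Gau{\ss}--Bonnet theorem.

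First I would recall the set-up: by Lemma~\ref{lem:cartan}, the vector field $X$ determined by $X\in\ker\omega_1\cap\ker\omega_2$ and $\alpha(X)=1$ is geodesible with $\alpha$ as a characteristic $1$-form, where $\alpha$ is the uniquely defined $1$-form with $\rmd\omegac=\rmi\,\alpha\wedge\omegac$. Since $\dim M=3$, this gives
\[ \volX=\int_M\alpha\wedge\rmd\alpha . \]
Next, $\omegac=\omega_1+\rmi\,\omega_2$ is formally integrable, and hence, as recalled above, defines a transverse holomorphic structure for the foliation $\FF=\langle X\rangle$. For this particular structure the relation $\rmd\omegac=\rmi\,\alpha\wedge\omegac$ shows that one may take $\alphac=\rmi\,\alpha$ in the defining equation $\rmd\omegac=\alphac\wedge\omegac$ for the Bott invariant, so the Bott invariant of the transverse holomorphic structure given by $\omegac$ equals
\[ \int_M\alphac\wedge\rmd\alphac=\int_M(\rmi\,\alpha)\wedge\rmd(\rmi\,\alpha)
   =-\int_M\alpha\wedge\rmd\alpha=-\volX . \]

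Finally I would appeal to \cite[Theorem~3.3]{gego16}: for a $1$-dimensional foliation arising as the common-kernel flow of a Cartan structure, the Bott invariant takes the same value for every transversely holomorphic structure on that foliation. Combined with the computation above, this shows that the Bott invariant of \emph{any} transversely holomorphic structure on $\FF=\langle X\rangle$ equals $-\volX$, which is precisely the assertion.

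The genuine difficulty is entirely located in the step we are permitted to quote: the proof that the Bott invariant does not depend on the transverse holomorphic data, but only on the underlying foliation, is the substance of the generalised Gau{\ss}--Bonnet theorem of \cite{gego16} and rests on a Godbillon--Vey type argument together with an analysis of surfaces of section for the flow of~$X$. Once that is granted, the only remaining work is the identification $\alphac=\rmi\,\alpha$ and the sign bookkeeping $\rmi^2=-1$, both immediate from the structure equations of the Cartan structure.
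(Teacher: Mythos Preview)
Your proposal is correct and follows exactly the argument the paper sketches in the paragraph preceding the theorem: identify $\alphac=\rmi\,\alpha$ from the Cartan structure equations, compute the Bott invariant of the resulting transverse holomorphic structure as $-\int_M\alpha\wedge\rmd\alpha=-\volX$, and then invoke \cite[Theorem~3.3]{gego16} to extend this to all transversely holomorphic structures on~$\langle X\rangle$. The paper presents the theorem with a bare \texttt{\textbackslash qed} precisely because the proof is contained in that preceding discussion, and your write-up spells it out faithfully.
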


The paper \cite{gego16} contains examples which show this to
be a nontrivial statement. There are instances of the generalised
Gau{\ss}--Bonnet theorem where the transverse holomorphic structure
is indeed not unique. In \cite{gego16} one can also find
a complete classification of the transversely holomorphic foliations
on~$S^3$, originally due (for all $3$-manifolds)
to Brunella and Ghys, and a computation of their Bott invariant.
\section{Global surfaces of section}
\label{section:sos}
We now want to compute $\volX$ under the assumption that the geodesible
vector field $X$ admits a global surface of section $\Sigma\subset M$.
For simplicity, we assume that $M$ is a closed, oriented manifold
of dimension~$3$, although our considerations
extend in an obvious manner to global hypersurfaces of section
in manifolds of higher odd dimension for an appropriate
definition of that concept.

Given such an s.o.s., we can associate with each point $p\in\Int(\Sigma)$
its \emph{return time} $\tau (p)\in\R^+$, i.e.\ the smallest positive
real number with $\phi_{\tau(p)}(p)\in\Int(\Sigma)$, were $\phi_t$ denotes
the flow of~$X$.

\begin{prop}
Let $\sigma$ be a basic $2$-form on $M$ that represents the Euler
class~$e_X$. Then
\[ \volX=-\int_{\Int(\Sigma)}\tau\sigma,\]
where we interpret $\sigma$ as a $2$-form on the transversal $\Int(\Sigma)$
for the flow of~$X$.
\end{prop}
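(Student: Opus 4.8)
The plan is to choose a convenient characteristic $1$-form for $X$, relate its exterior derivative to the given basic representative $\sigma$ of $e_X$, and then integrate over $M$ by slicing along the return flow to the surface of section.

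First I would fix a characteristic $1$-form $\alpha$ for $X$ with $-[\rmd\alpha]_{\mathrm{B}}=e_X$; by the second lemma of Section~5 (the one following the definition of the Euler class) we may even choose $\alpha$ so that $\rmd\alpha=-\sigma$ exactly, since $-[\sigma]_{\mathrm{B}}=e_X$. Then $\volX=\int_M\alpha\wedge\rmd\alpha=-\int_M\alpha\wedge\sigma$, and it remains to show $\int_M\alpha\wedge\sigma=\int_{\Int(\Sigma)}\tau\sigma$.

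Next I would set up the flow coordinates. Since $\Sigma$ is a global surface of section with return time $\tau\co\Int(\Sigma)\to\R^+$, the map $(p,t)\mapsto\phi_t(p)$ descends to a diffeomorphism from the mapping torus $\{(p,t)\co p\in\Int(\Sigma),\ 0\le t\le\tau(p)\}/(p,\tau(p))\sim(\phi_{\tau(p)}(p),0)$ onto $M$ minus the (measure-zero) set of boundary orbits, so integrals over $M$ may be computed there. In these coordinates $X=\partial_t$, hence $\alpha(X)=1$ gives $\alpha=\rmd t+\alpha_\Sigma$ where $\alpha_\Sigma$ is horizontal in the $t$-direction (i.e.\ $i_X\alpha_\Sigma=0$); and $\sigma$, being basic, satisfies $i_X\sigma=0$ and $i_X\rmd\sigma=0$, so in these coordinates $\sigma$ is pulled back from $\Int(\Sigma)$ and is $t$-independent along each orbit — more precisely $L_X\sigma=0$, so transporting $\sigma$ by the flow identifies its restriction to each time-slice with its restriction to $\Int(\Sigma)$. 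Now $\alpha\wedge\sigma=(\rmd t+\alpha_\Sigma)\wedge\sigma=\rmd t\wedge\sigma+\alpha_\Sigma\wedge\sigma$; the second term has no $\rmd t$ and integrates to zero over each fiber circle (it is a $3$-form with no $\partial_t$ component in a product with $1$-dimensional $t$-direction, hence vanishes identically as a $3$-form on the mapping torus once we note $\alpha_\Sigma\wedge\sigma$ is a horizontal $3$-form on a manifold whose horizontal distribution is only $2$-dimensional). So $\int_M\alpha\wedge\sigma=\int_M\rmd t\wedge\sigma=\int_{\Int(\Sigma)}\Bigl(\int_0^{\tau(p)}\rmd t\Bigr)\sigma=\int_{\Int(\Sigma)}\tau\sigma$, using Fubini and $L_X\sigma=0$ to identify $\sigma$ on each slice with $\sigma$ on $\Int(\Sigma)$. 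Combining, $\volX=-\int_{\Int(\Sigma)}\tau\sigma$.

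The main obstacle is making the change-of-variables to flow coordinates rigorous: one must check that the parametrization by the mapping torus is a diffeomorphism off a null set (the orbits in $\partial\Sigma$), that $\tau$ is smooth on $\Int(\Sigma)$ and integrable against $\sigma$ near $\partial\Sigma$ (return time may blow up there, but $\tau\sigma$ should still be integrable by the compactness and transversality built into the s.o.s.\ definition), and that the splitting $\alpha=\rmd t+\alpha_\Sigma$ together with $L_X\sigma=0$ genuinely reduces the fiber integral to $\int_0^{\tau(p)}\rmd t$. Everything else is Stokes/Fubini bookkeeping once these points are settled.
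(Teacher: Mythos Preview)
Your proof is correct and takes essentially the same approach as the paper: parametrise $M\setminus\partial\Sigma$ by the flow of $X$ from $\Int(\Sigma)$ and use that $\sigma$ is basic to collapse the fibre integral to multiplication by~$\tau$. The only cosmetic differences are that the paper invokes the pairing of Proposition~\ref{prop:pairing} to obtain $\int_M\alpha\wedge\rmd\alpha=-\int_M\alpha\wedge\sigma$ for \emph{any} characteristic $\alpha$ (so it need not arrange $\rmd\alpha=-\sigma$), and it uses the rescaled-time map $\Phi(t,p)=\phi_{t\tau(p)}(p)$ on the product $[0,1)\times\Int(\Sigma)$, which yields $(\Phi^*\alpha)(\partial_t)=\tau(p)$ in one line without decomposing $\alpha$ as $\rmd t+\alpha_\Sigma$.
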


\begin{proof}
Let $\alpha$ be a characteristic $1$-form of~$X$.
By Proposition~\ref{prop:pairing} we have
\[ \volX=\int_M\alpha\wedge\rmd\alpha=-\int_M\alpha\wedge\sigma.\]
To compute the integral on the right, we consider the
injective immersion
\[ \begin{array}{rccccc}
\Phi\co & [0,1) & \times & \Int(\Sigma) & \longrightarrow & M\\
        & (t       & ,      & p)           & \longmapsto     &
  \phi_{t\tau(p)}(p).
\end{array} \]
Since $T\Phi(\partial_t)$ is a multiple of~$X$, and $\sigma$ a basic
differential form, we can compute
\begin{eqnarray*}
\int_M\alpha\wedge\sigma
 & = & \int_{M\setminus\partial\Sigma}\alpha\wedge\sigma\\
 & = & \int_{[0,1)\times\Int(\Sigma)}\Phi^*(\alpha\wedge\sigma)\\
 & = & \int_{\Int(\Sigma)}\biggl(\int_0^1\bigl(\Phi^*\alpha\bigr)_{(t,p)}
       (\partial_t)\,\rmd t\biggr)\, \sigma\\
 & = & \int_{\Int(\Sigma)}\tau\sigma.
\end{eqnarray*}
In the last line we used that
\[ \bigl(\Phi^*\alpha\bigr)_{(t,p)}(\partial_t)=
\alpha_{\Phi(t,p)}\bigl(T\Phi(\partial_t)\bigr)=
\alpha_{\Phi(t,p)}\bigl(\tau(p)X\bigr)=\tau(p).\]
Hence $\volX=\displaystyle{-\int_{\Int(\Sigma)}\tau\sigma}$, as claimed.
\end{proof}

\begin{ex}
\label{ex:D2}
On $D^2$ with polar coordinates $(r,\varphi)$ we write
$\lambda=r^2\,\rmd\varphi/2$ for the primitive $1$-form
of the standard area
form $\omega=\rmd\lambda=r\,\rmd r\wedge\rmd\varphi$.
On $\R/\Z\times D^2$ we consider the $1$-form
\[ \alpha=H\,\rmd\theta+\lambda,\]
where $H$ is a smooth function of~$r^2$.
In the sequel it will always be understood that
$H$ or its derivative $H'$ is evaluated at~$r^2$. Then
\[ \rmd\alpha=2rH'\,\rmd r\wedge\rmd\theta+\omega\]
and
\[ \alpha\wedge\rmd\alpha=(H-r^2H')\,\rmd\theta\wedge\omega.\]
We assume that $H-r^2H'>0$; then $\alpha$ is a contact form.
As discussed in~\cite{agz18a}, this $1$-form descends to
a contact form (still denoted~$\alpha$)
on~$S^3$, obtained from $S^1\times D^2$ by
collapsing the circle action on the boundary $S^1\times\partial D^2$
generated by
\[ \partial_{\theta}-2H(1)\partial_{\varphi}
\in\ker\alpha|_{T(S^1\times\partial D^2)}.\]
The Reeb vector field of $\alpha$ (on $S^1\times D^2$) is
\begin{equation}
\label{eqn:reebcut}
X=\frac{\partial_{\theta}-2H'\partial_{\varphi}}{H-r^2H'}.
\end{equation}
Thus,
\[ \volX=\int_{S^3}\alpha\wedge\rmd\alpha=\int_{S^1\times D^2}\alpha\wedge
\rmd\alpha=\int_{D^2}(H-r^2H')\omega.\]
On the other hand, the disc $\{0\}\times D^2$ descends to an s.o.s.\
for the Reeb flow on~$S^3$, and by (\ref{eqn:reebcut})
the return time is
\[ \tau=H-r^2H'.\]
So we see that $\volX$ can likewise be computed as
\[ \volX=\int_{D^2}\tau\sigma\]
with $\sigma=\rmd\alpha|_{TD^2}$ or any other $2$-form that differs
from $\rmd\alpha$ by the differential of a basic $1$-form
for $X$ on~$S^3$ (not just on $S^1\times D^2$).
\end{ex}

\begin{rem}
For an expression of $\volX$ in the preceding example
in terms of the Calabi invariant of the return map on the
s.o.s.\ see \cite{abhs18}.
\end{rem}
\section{Contact forms with the same Reeb vector field}
\label{section:reeb}
In this section we present examples of nondiffeomorphic contact forms
with the same  Reeb vector field.

\begin{thm}
\label{thm:reeb}
In any odd dimension $\geq 9$ there is a closed manifold admitting
a countably infinite family of contact forms that are pairwise
nondiffeomorphic but share the same Reeb vector field.
\end{thm}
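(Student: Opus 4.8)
The plan is to exploit Proposition~\ref{prop:symplectic}: a geodesible vector field $X$ on a $(2n+1)$-manifold $M$ is a Reeb vector field precisely when $e_X\in\HB^2(\FF)$ has a representative $\omega$ with $\omega^n\neq 0$. So I want a single geodesible $X$ such that the odd-symplectic representatives of $e_X$ — hence the associated contact forms — come in infinitely many diffeomorphism classes. The cleanest source of such $X$ is a free $S^1$-action: if $M\to B=M/S^1$ is a principal circle bundle, then $\HB^\bullet(\FF)\cong H^\bullet(B;\R)$, the generator $X$ of the action is geodesible, and $e_X$ is the real Euler class $e\otimes\R$. A characteristic $1$-form $\alpha$ is a connection form, and $\alpha$ is contact iff its curvature $\rmd\alpha$ pulls back from a symplectic form on $B$ representing $-e\otimes\R$; different symplectic forms in that class give different contact forms, all with Reeb field $X$ (after the normalization $\alpha(X)=1$).

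The strategy is therefore: (1) fix a smooth closed manifold $B$ of dimension $2n\geq 8$ that carries infinitely many symplectic forms $\{\omega_k\}$, all in the same cohomology class $a\in H^2(B;\R)$, but which are pairwise inequivalent in a way that survives being pulled back to a circle bundle — e.g. distinguished by Gromov--Witten invariants or Chern numbers, as in the examples of McDuff, Ruan, and others of symplectic manifolds with a fixed cohomology class supporting non-deformation-equivalent forms; (2) choose a principal $S^1$-bundle $\pi\colon M\to B$ with real Euler class $-a$ (possible whenever $a$ is (a real multiple of) an integral class, which one can arrange by rescaling), and let $X$ generate the $S^1$-action; (3) for each $k$ pick a connection $1$-form $\alpha_k$ with $\rmd\alpha_k=\pi^*\omega_k$, rescaled so that $\alpha_k(X)=1$; then each $\alpha_k$ is a contact form with Reeb vector field $X$; (4) show that a diffeomorphism $M\to M$ carrying $\alpha_j$ to $\alpha_k$ would descend to (or otherwise produce) a symplectomorphism, or at least a diffeomorphism of $B$ intertwining the deformation classes of $\omega_j$ and $\omega_k$, contradicting the choice in (1) for $j\neq k$; so infinitely many of the $\alpha_k$ are pairwise nondiffeomorphic.

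I expect step (4) to be the main obstacle. A diffeomorphism $\psi$ of $M$ with $\psi^*\alpha_k=\alpha_j$ need not a priori be $S^1$-equivariant, so it need not descend to $B$ directly. The way around this is that $\psi$ does preserve the common Reeb field $X$ (since the Reeb field is determined by the contact form), hence preserves the foliation $\FF$ by $X$-orbits, i.e. the $S^1$-orbits; and on a principal bundle an orbit-preserving diffeomorphism that also preserves a connection form is automatically equivariant, so it covers a diffeomorphism $\bar\psi$ of $B$ with $\bar\psi^*\omega_k=\omega_j$. Thus the contact-form data is rigid enough to force a genuine symplectomorphism downstairs, and the hypothesis that the $\omega_k$ lie in distinct diffeomorphism-orbits of symplectic structures does the rest. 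The remaining bookkeeping — checking that such $B$ of dimension $\geq 8$ exists (so that $\dim M = 2n+1\geq 9$), that the $\omega_k$ can be taken cohomologous, and that the diffeomorphism types are genuinely distinct — is where one invokes known constructions in symplectic topology rather than proving anything new.
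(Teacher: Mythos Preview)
Your proposal is correct and follows essentially the same approach as the paper: both use the Boothby--Wang construction over McDuff's examples of closed $2n$-manifolds ($2n\geq 8$) carrying infinitely many cohomologous but pairwise nondiffeomorphic symplectic forms, take connection $1$-forms as the contact forms, and argue that a diffeomorphism $\psi$ with $\psi^*\alpha_k=\alpha_j$ must preserve the common Reeb field $X$ and hence descend to a symplectomorphism of the base. The paper is simply more specific about the input (the twisted split form on $S^2\times T^2\times S^2\times S^2$ blown up along $S^2\times T^2$, from \cite{mcdu87}, with the blow-up parameter chosen to make the class rational), and its descent argument is phrased more directly: $\psi_*X=X$ already forces $\psi$ to commute with the $S^1$-action, so there is no need for the intermediate ``orbit-preserving plus connection-preserving'' step.
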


\begin{proof}
We construct these manifolds as Boothby--Wang bundles
\cite{bowa58}, \cite[Section~7.2]{geig08} over integral
symplectic manifolds. Starting point for our construction
are examples of symplectic manifolds, in any even dimension $\geq 8$,
with cohomologous but nondiffeomorphic symplectic forms, devised
by McDuff~\cite{mcdu87}. In dimension eight, one begins with
the manifold $S^2\times T^2\times S^2\times S^2$ with the standard split
symplectic form. We think of $T^2$ as $(\R/\Z)^2$.
One then twists this symplectic form by a diffeomorphism
\[ (p_1;s_2,t_2;p_3;p_4)\longmapsto
\bigl(p_1;s_2,t_2,\psi_k(p_1,t_2)(p_3);p_4\bigr),\]
where $\psi_k(p_1,t_2)\co S^2\rightarrow S^2$ is the rotation
of $S^2$ about the axis determined by $\pm p_1$ through an angle
$2\pi kt_2$.
Finally, one takes the symplectic blow-up of these forms along
$S^2\times T^2\times\{(p_3,p_4)\}$ with the same blow-up parameter
(giving the `size' of the blow-up) for all~$k\in\N_0$.

The resulting symplectic forms $\omega_k$ on the blown-up
manifold $W$ are cohomologous and homotopic through
(noncohomologous) symplectic forms, but they are pairwise
nondiffeomorphic.  By taking products with copies of $S^2$, one obtains
similar examples in higher dimensions.

The cohomology class of the symplectic form on a manifold
obtained as a blow-up has been computed in~\cite{mcdu84}, and
from there one sees that
the blow-up can be chosen in such a way that this cohomology class
is rational. Hence, after a constant rescaling we may assume the symplectic
forms $\omega_k$ to be integral, i.e.\ their de Rham cohomology class
$[\omega_k]$ lies in the image of the inclusion $H^2(W;\Z)\subset H^2(W;\R)=
H^2_{\mathrm{dR}}(W)$.

Now choose a class $e\in H^2(W;\Z)$ with $e\otimes\R=-[\omega_k]$,
and let $\pi\co M\rightarrow W$ be the $S^1$-bundle over $W$
of Euler class~$e$. One then finds,
for each $k\in\N_0$, a connection $1$-form $\alpha_k$ on $M$
with curvature form $\omega_k$, that is,
$\rmd\alpha_k=\pi^*\omega_k$, see~\cite[Section~7.2]{geig08}.
(This normalisation corresponds to thinking of $S^1$ as $\R/\Z$.)
Hence, the $\alpha_k$ are contact forms with Reeb vector field given
by the unit tangent vector field along the fibres.

The $\alpha_k$, $k\in\N_0$, are pairwise nondiffeomorphic, because
any diffeomorphism between $\alpha_k$ and $\alpha_{\ell}$ would
preserve the Reeb vector field and hence descend to a diffeomorphism
between $\omega_k$ and~$\omega_{\ell}$.
\end{proof}

\begin{rem}
(1) I do not know whether the contact structures $\ker\alpha_k$
are diffeomorphic. They all have the same underlying almost contact structure.

(2) I hedge my bets concerning dimensions $5$ and~$7$.

(3) Contact forms with all Reeb orbits closed and of the same minimal period
are also called \emph{Zoll contact forms}~\cite{abhs18,abbe19}.
\end{rem}
\section{Orbit equivalence}
\label{section:orbitequ}
A slighty weaker question than the one asked by Viterbo is the following:
are there examples of contact forms with the same Reeb vector field
up to scaling by a function? Or, put differently, one asks
for nondiffeomorphic contact forms whose Reeb flows are smoothly
orbit equivalent. For the more general class of geodesible
vector fields, this problem is best phrased as follows:
on a manifold $M$, is there a geodesible vector field $X$
and a function $f\in C^{\infty}(M,\R^+)$ such that $fX$ is likewise
geodesible? Of course, one should exclude the trivial case of
$f$ being constant, where one simply rescales the metric by
the inverse constant.

This is related, but not equivalent to the question about
nontrivially geodesically equivalent metrics, where two
Riemannian metrics share the same geodesics up to reparametrisation (so the
geodesic flows are orbit equivalent), but one metric is not
a constant multiple of the other.

Matveev~\cite{matv03} has shown that among closed, connected
$3$-manifolds, examples of nontrivially geodesically
equivalent metrics exist only on lens spaces and
Seifert manifolds with Euler number zero. See also~\cite{matv12}
for a discussion of this phenomenon in the context of general relativity.

Our question asks about the nontrivial equivalence of two
foliations by geodesible vector fields. In some sense, this is a weaker
question; on the other hand, a nontrivial equivalence between two
Riemannian metrics may well become trivial when restricted to
any geodesic foliation.

\begin{ex}
On the $2$-torus $T^2=(\R/\Z)^2$ we consider the
standard flat metric $g_1=\rmd x_1^2+\rmd x_2^2$ and a second flat
metric $g_2=\rmd x_1^2+a\,\rmd x_2^2$ with $a\in\R^+\setminus\{1\}$.
Then $g_2$ is not a constant multiple of $g_1$, but the two metrics
are geodesically equivalent: the geodesics in both cases are the
images of straight lines in~$\R^2$ under the projection to~$T^2$.
A geodesic foliation is given by the straight lines of some constant slope,
and along those parallel lines the unit vector fields for the two
metrics differ by a constant.
\end{ex}

Using an idea going back to Beltrami and explained in~\cite{matv03},
we can exhibit a simple example of geodesically equivalent metrics on~$S^3$
that give rise to a geodesible vector field admitting
nontrivial rescalings into likewise geodesible vector fields.
Here, by construction, the vector fields are diffeomorphic.
As I shall explain, rescalings of geodesible vector fields
that define an $S^1$-fibration will always be diffeomorphic.

\begin{ex}
\label{ex:beltrami}
For $a_1,a_2\in\R^+$, consider the linear map
$A=A_{a_1,a_2}\co (z_1,z_2)\mapsto (a_1z_1,a_2z_2)$
on $\C^2=\R^4$. Then define
$\phi=\phi_{a_1,a_2}\co S^3\rightarrow S^3$
by $\phi(p)=A(p)/|A(p)|$. Let $g_{a_1,a_2}=\phi^*g_0$
be the pull-back of the round metric $g_0$ on~$S^3$. Since
$\phi$ takes great circles to great circles, the metric $\phi^*g_0$
is geodesically equivalent to~$g_0$, nontrivially so unless $a_1=a_2$.

A straightforward computation yields the following
expression for $g_{a_1,a_2}$:
\begin{eqnarray*}
g_{a_1,a_2}
& = &  \frac{a_1^2}{\Delta}\,\bigl(\rmd x_1^2+\rmd y_1^2\bigr)
      +\frac{a_2^2}{\Delta}\,\bigl(\rmd x_2^2+\rmd y_2^2\bigr)\\
&   & -\frac{a_1^4}{\Delta^2}\,\bigl(x_1\,\rmd x_1+y_1\,\rmd y_1\bigr)^2
      -\frac{a_2^4}{\Delta^2}\,\bigl(x_2\,\rmd x_2+y_2\,\rmd y_2\bigr)^2\\
&   & -\frac{2a_1^2a_2^2}{\Delta^2}\,\bigl(x_1\,\rmd x_1+y_1\,\rmd y_1\bigr)
       \,\bigl(x_2\,\rmd x_2+y_2\,\rmd y_2\bigr),
\end{eqnarray*}
where we write
\[ \Delta=\Delta_{a_1,a_2}(r_1,r_2)=a_1^2r_1^2+a_2^2r_2^2.\]
Recall that in terms of polar coordinates
we have $\rmd x_i^2+\rmd y_i^2=\rmd r_i^2+r_i^2\,\rmd\varphi_i^2$
and $x_i\,\rmd x_i+y_i\,\rmd y_i=r_i\,\rmd r_i$.

The positive Hopf fibration is generated by $X_0=\partial_{\varphi_1}+
\partial_{\varphi_2}$. This vector field has constant length~$1$
with respect to all the metrics~$g_{a_1,a_2}$, so from the viewpoint
of geodesic foliations this yields nothing new.
Also, the corresponding contact forms
\[ \alpha_{a_1,a_2}=g_{a_1,a_2}(X_0,\,.\,)
=\frac{a_1^2r_1^2\,\rmd\varphi_1+a_2^2r_2^2\,\rmd\varphi_2}{\Delta}\]
all have $X_0$ as Reeb vector field, and so they are just diffeomorphic
deformations of the standard contact form $\alpha_{1,1}$
by Proposition~\ref{prop:three}.

A more interesting choice is to take the great circle foliation generated by
\[ X_1=x_1\partial_{x_2}-x_2\partial_{x_1}+
y_1\partial_{y_2}-y_2\partial_{y_1}.\]
We write $L=L_{a_1,a_2}=\bigl(g_{a_1,a_2}(X_1,X_1)\bigr)^{1/2}$
for the length of $X_1$ with respect to $g_{a_1,a_2}$.
One computes
\[ L^2=\frac{a_1^2r_2^2+a_2^2r_1^2}{\Delta}
-\frac{(a_1^2-a_2^2)^2}{\Delta^2}\,(x_1x_2+y_1y_2)^2.\]

Thus, we have found the nontrivial family of geodesible vector fields
$X_1/L_{a_1,a_2}$, with corresponding metric $g_{a_1,a_2}$, all generating
the same foliation of $S^3$ by great circles.
The corresponding $1$-form
\[ \alpha=\alpha_{a_1,a_2}=g_{a_1,a_2}(X_1/L_{a_1,a_2},\,.\,)\]
can be computed explicitly as
\begin{eqnarray*}
L\alpha
& = & -\frac{a_1^2}{\Delta}\,(x_2\,\rmd x_1+y_2\,\rmd y_1)
      +\frac{a_2^2}{\Delta}\,(x_1\,\rmd x_2+y_1\,\rmd y_2)\\
&   & +\frac{a_1^4-a_1^2a_2^2}{\Delta^2}\,
          (x_1x_2+y_1y_2)\, (x_1\,\rmd x_1+y_1\,\rmd y_1)\\
&   & -\frac{a_2^4-a_1^2a_2^2}{\Delta^2}\,
          (x_1x_2+y_1y_2)\, (x_2\,\rmd x_2+y_2\,\rmd y_2).
\end{eqnarray*}

I did not check whether these are contact forms for all $a_1,a_2\in\R^+$,
but by the openness of the contact condition they certainly are
for $a_1,a_2$ close to~$1$. Then $X_1/L_{a_1,a_2}$
will be the Reeb vector field of $\alpha_{a_1,a_2}$.
\end{ex}

The following proposition gives a more systematic statement about
rescalings of geodesible vector fields that define an $S^1$-fibration.
This is essentially due to Wadsley~\cite{wads75} (in greater
generality); for the case at hand it can be
retraced to the work of Boothby and Wang~\cite{bowa58}.

\begin{prop}
\label{prop:wbw}
Let $X$ be a geodesible vector field on a closed manifold~$M$ such that
the flow lines of $X$ are the fibres of a principal $S^1$-bundle
$M\rightarrow M/S^1$. Then, after a constant rescaling of $X$ all orbits
have (minimal) period~$1$, so that the flow of $X$ defines the $S^1$-action.
A rescaling $fX$ of $X$ is likewise geodesible if and only if
all orbits have the same period. When this period is~$1$,
the vector fields $X$ and $fX$ are diffeomorphic by a diffeomorphism
that sends each fibre to itself and is isotopic to the identity
via such diffeomorphisms.
\end{prop}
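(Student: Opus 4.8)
The plan is to prove the equivalence ``a rescaling $fX$ is geodesible $\iff$ all its orbits have the same period'' by showing, essentially at the same time, that \emph{any} geodesible vector field whose orbits are the fibres of a principal $S^1$-bundle has constant period; and then, in the period-one case, to write down the conjugating diffeomorphism as the time-$w$ map of the flow of $X$ for a suitable function $w$. Throughout we may assume $M$ connected.

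First I would reduce the period statement to the fact that $\rmd$ of a characteristic form is basic. Let $\xi$ generate the given principal $S^1=\R/\Z$ action, of minimal period $1$ on each fibre, write $\pi\co M\to B:=M/S^1$, and note $X=\lambda\xi$ with $\lambda\in C^\infty(M,\R^+)$. For a characteristic $1$-form $\alpha$ of $X$ we have $i_\xi\rmd\alpha=0$, so $\rmd\alpha=\pi^*\omega$ for some $2$-form $\omega$ on~$B$. The minimal period of the $X$-orbit through $x$ is the $X$-time needed to traverse $F_{\pi(x)}$ once, namely $\int_{F_{\pi(x)}}\alpha$; as a function of the base point this is the fibre integral $\pi_*\alpha$, and $\rmd(\pi_*\alpha)=\pi_*\rmd\alpha=\pi_*\pi^*\omega=0$ (fibre integration over the closed fibres commutes with $\rmd$, and the fibre integral of a pulled-back form vanishes for degree reasons). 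Hence the period is a constant; rescaling $X$ by it makes every orbit have period~$1$, so the flow of $X$ descends to a free $S^1$-action with the given orbits. The same computation with a characteristic form $\beta$ of $fX$ is legitimate (as $f,\lambda>0$ one has $i_{fX}\rmd\beta=0\iff i_\xi\rmd\beta=0$) and gives the ``only if'' direction. For ``if'': if every $fX$-orbit has minimal period $q_0$, then --- since $f>0$ makes each $fX$-orbit a full fibre traversed once --- $q_0fX$ generates a free $S^1$-action; choosing a principal connection $1$-form $\mathcal A$ we get $\mathcal A(q_0fX)=1$ and $L_{q_0fX}\mathcal A=0$, so $q_0fX$, hence $fX$, is geodesible by Proposition~\ref{prop:wadsley}, (ii)$\Rightarrow$(i).

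For the final assertion, assume the common $fX$-period equals $1$, so that $X$ (after its rescaling) and $fX$ both generate free $\R/\Z$-actions with the same orbits. Let $\phi_t$ be the flow of $X$ and set $g:=1/f-1$. The average of $g$ over the $X$-orbit through $x$ is $\int_0^1 f(\phi_t x)^{-1}\,\rmd t-1$, which is the $fX$-period of that orbit minus~$1$, hence $0$; therefore $w(x):=\int_0^1 t\,g(\phi_t x)\,\rmd t$ is a smooth function on $M$ solving $Xw=g$ (a routine computation using $\phi_1=\mathrm{id}$ and the vanishing average). For $\lambda\in[0,1]$ put $\Psi_\lambda(x):=\phi_{\lambda w(x)}(x)$. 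Each $\Psi_\lambda$ maps every fibre to itself, and in the fibre parameter $t\mapsto\phi_t x$ it reads $t\mapsto t+\lambda w(\phi_t x)$, with derivative $1+\lambda(Xw)(\phi_t x)=(1-\lambda)+\lambda/f(\phi_t x)>0$; so $\Psi_\lambda$ restricts to an orientation-preserving diffeomorphism of each fibre and, being smooth, fibre-preserving and covering the identity of $B$, is a diffeomorphism of $M$, depending smoothly on $\lambda$, with $\Psi_0=\mathrm{id}$. Finally, differentiating $\Psi_1$ along $f(x)X_x$ yields $f(x)\bigl(1+(Xw)(x)\bigr)X_{\Psi_1(x)}=X_{\Psi_1(x)}$, i.e.\ $(\Psi_1)_*(fX)=X$. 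Thus $\Psi_1$ is the desired diffeomorphism and $(\Psi_\lambda)_{\lambda\in[0,1]}$ the desired isotopy.

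The only genuinely non-formal ingredient is the second paragraph: geodesibility forces $\rmd$ of a characteristic form to be basic, and this is exactly what pins the fibrewise integral --- the period --- down to a constant; the rest is bookkeeping. I expect the one place needing a little care is checking that the $\Psi_\lambda$ are honest (not merely local) diffeomorphisms forming a smooth isotopy, which the uniform positivity of $(1-\lambda)+\lambda/f$ takes care of; note also that the solvability of $Xw=1/f-1$ is precisely the hypothesis that all $fX$-orbits have period~$1$.
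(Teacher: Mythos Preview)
Your proof is correct, and it proceeds along somewhat different lines from the paper's own argument in two places.

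For the constancy of the period, the paper simply cites Lemmas~7.2.6 and~7.2.7 from \cite{geig08}, whereas you give a self-contained argument: the period function is the fibre integral $\pi_*\alpha$, and $\rmd\pi_*\alpha=\pi_*\rmd\alpha=\pi_*\pi^*\omega=0$ because $\rmd\alpha$ is basic. This is a clean and transparent replacement for the external reference. For the final diffeomorphism, the paper works locally: on a trivialised neighbourhood $U\times S^1$ it interpolates the velocity function $v_u$ with the constant function~$1$ via a bump function in~$U$, and then patches these local isotopies together. You instead solve the cohomological equation $Xw=1/f-1$ globally (the vanishing of the fibrewise average of $1/f-1$ being exactly the period-$1$ hypothesis) and write down the isotopy $\Psi_\lambda(x)=\phi_{\lambda w(x)}(x)$ in one stroke; the fibrewise derivative $(1-\lambda)+\lambda/f>0$ then certifies that each $\Psi_\lambda$ is a diffeomorphism. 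Your route avoids any patching and yields an explicit conjugacy; the paper's route is perhaps more hands-on but requires the partition-of-unity bookkeeping. Both are valid; yours is arguably more direct.
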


\begin{proof}
If $X$ is geodesible, we find a $1$-form $\alpha$ with $\alpha(X)=1$ and
$i_X\rmd\alpha=0$ by Proposition~\ref{prop:wadsley}. Then
\cite[Lemmas 7.2.6 and 7.2.7]{geig08}, which fill a gap
in~\cite{bowa58}, show that the orbits of $X$ all have the same period.
Notice that Lemma~7.2.7 in \cite{geig08} is formulated for Reeb vector fields,
but the proof only uses the property $i_X\rmd\alpha=0$, not the
nondegeneracy of $\rmd\alpha|_{\ker\alpha}$. 

If $fX$ is geodesible, the same argument applies. Conversely,
if the orbits of $fX$ all have the same period, then the flow of $X$
defines an $S^1$-bundle structure, and any connection $1$-form
for this bundle is a characteristic $1$-form for $fX$, which makes $fX$
geodesible.

Now suppose the period of $fX$ equals~$1$. Given a local section $U\cong D^2$
of~$X$, the flow of $X$ defines a trivialisation $U\times S^1$
of the bundle, and $f$ gives rise to a family of $1$-periodic
velocity functions
$v_u\co [0,1]\rightarrow\R^+$ with $\int_0^1v_u(t)\,\rmd t=1$
for every $u\in U$. (I refrain from writing $v_u$
as a function on $S^1$, since the time parameter $t$
should not be confused with the fibre parameter defined by the
flow of~$X$.) Let $\psi\co D^2\rightarrow[0,1]$ be a bump function
equal to $1$ on a disc of radius $1/2$, say, and supported in the interior
of~$D^2$. Then
\[ \mu\bigl(\psi(u)v_u+1-\psi (u)\bigr)+1-\mu \]
defines for each $u\in U$ and $\mu\in [0,1]$ a $1$-periodic
velocity function $[0,1]\rightarrow\R^+$
of integral~$1$. This gives rise to an isotopy along fibres whose
time-$1$ map sends $X$ to $fX$ on fibres where $\psi(u)=1$, and 
which is stationary on fibres along which $f=1$. This allows us to patch
together such local isotopies to obtain the desired result.
\end{proof}

The next corollary also applies to Example~\ref{ex:beltrami}.

\begin{cor}
If $R$ and $fR$ are Reeb vector fields on a closed $3$-manifold
with all orbits periodic of the same period~$1$, then any corresponding
contact forms are related via a fibre-preserving isotopy.
\end{cor}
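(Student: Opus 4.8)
The plan is to conjugate $\beta$ by the fibre-preserving diffeomorphism provided by Proposition~\ref{prop:wbw} and then to invoke Proposition~\ref{prop:three}.

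First I would set up the $S^1$-action. Since $R$ is the Reeb vector field of some contact form and all its orbits are periodic of minimal period~$1$, the flow of $R$ is a free $S^1$-action; hence $M\to B:=M/S^1$ is a principal $S^1$-bundle over a closed surface~$B$, and $R$ is geodesible with flow lines the fibres. Any contact form $\alpha$ with Reeb vector field $R$ satisfies $L_R\alpha=\rmd(\alpha(R))+i_R\rmd\alpha=0$, so $\alpha$ is $S^1$-invariant; in particular $\rmd\alpha$ descends to an area form on~$B$. The vector field $fR$ is likewise geodesible (being a Reeb vector field) with all orbits of period~$1$, so Proposition~\ref{prop:wbw} applies with $X=R$ and $fX=fR$ and yields a diffeomorphism $\phi$ of $M$ sending each fibre to itself, isotopic to $\mathrm{id}_M$ through such diffeomorphisms by an isotopy $(\phi_t)$, and satisfying $\phi_*R=fR$.

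Next, let $\alpha$ be a contact form with Reeb vector field $R$ and $\beta$ one with Reeb vector field $fR$. Then $\phi^*\beta$ is a contact form whose Reeb vector field is $(\phi^{-1})_*(fR)=R$. Hence $\alpha$ and $\phi^*\beta$ are two contact forms sharing the Reeb vector field~$R$, and Proposition~\ref{prop:three} gives an isotopy $(\psi_t)_{t\in[0,1]}$ with $\psi_0=\mathrm{id}_M$ and $\psi_1^*(\phi^*\beta)=\alpha$, generated by time-dependent vector fields $X_t\in\ker\alpha_t$, where $\alpha_t=(1-t)\alpha+t\,\phi^*\beta$.

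The one point requiring attention is that this isotopy is automatically fibre-preserving. As both $\alpha$ and $\phi^*\beta$ are characteristic $1$-forms of $R$, whose flow is the $S^1$-action, the forms $\alpha_t$, $\rmd\alpha_t$ and $\alpha-\phi^*\beta$ are all $S^1$-invariant. From $\alpha_t(X_t)\equiv 0$ one gets $\alpha_t([R,X_t])=R\bigl(\alpha_t(X_t)\bigr)-(L_R\alpha_t)(X_t)=0$, and from $i_{X_t}\rmd\alpha_t=\alpha-\phi^*\beta$ one gets $i_{[R,X_t]}\rmd\alpha_t=L_R\bigl(i_{X_t}\rmd\alpha_t\bigr)=0$; since $\rmd\alpha_t$ is nondegenerate on $\ker\alpha_t$ (it is a contact form in dimension three), this forces $[R,X_t]=0$. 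Therefore $\psi_t$ commutes with the $S^1$-action, so each $\psi_t$ is fibre-preserving. Finally $\Psi_t:=\phi_t\circ\psi_t$ is a fibre-preserving isotopy with $\Psi_0=\mathrm{id}_M$ and $\Psi_1^*\beta=\psi_1^*\phi^*\beta=\alpha$, which is the assertion. I expect the equivariance argument of this last paragraph to be the only real obstacle; the rest merely combines Propositions \ref{prop:three} and~\ref{prop:wbw}.
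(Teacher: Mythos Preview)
Your proof is correct and follows exactly the paper's primary argument, which simply says the corollary ``follows immediately by combining the last statement of Proposition~\ref{prop:wbw} with Proposition~\ref{prop:three}''; you have filled in the one detail the paper leaves implicit, namely that the Moser isotopy $(\psi_t)$ produced by Proposition~\ref{prop:three} is automatically $S^1$-equivariant, and your verification via $[R,X_t]=0$ is clean and correct. The paper also offers an alternative direct proof that bypasses Proposition~\ref{prop:wbw} by running a single Moser argument with ansatz $X_t=h_tR_t+Y_t$, using the period-$1$ condition on $fR$ to solve $\rmd h_t(R)=1-f^{-1}$ along fibres, and then observing that $Y_t$ is the lift of a vector field on $M/S^1$; this route trades your two-step decomposition for a self-contained computation, but the content is the same.
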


\begin{proof}
This follows immediately by combining the last statement of
Proposition~\ref{prop:wbw} with Proposition~\ref{prop:three}.

Alternatively, one can give a direct proof, using a refinement of
the proof of Proposition~\ref{prop:three}.
Let $\alpha_0$ be a contact form with Reeb vector field~$R$, and
$\alpha_1$ a contact form for~$fR$. Set $\alpha_t:=(1-t)\alpha_0+t\alpha_1$.
We would like to find an isotopy $(\psi_t)_{t\in[0,1]}$ satisfying
(\ref{eqn:isotopy3}) as in the proof of Proposition~\ref{prop:three}.

The $\alpha_t$ are contact forms with Reeb vector field $R_t$
proportional to~$R$. We try to find an isotopy $(\psi_t)$ generated by
a vector field $X_t$ of the form
\[ X_t= h_tR_t+Y_t\]
with $Y_t\in\ker\alpha_t$. Differentiating (\ref{eqn:isotopy3}) we find
\begin{equation}
\label{eqn:Y}
\alpha_1-\alpha_0+\rmd h_t+i_{Y_t}\rmd\alpha_t=0.
\end{equation}
When we plug $R$ into this equation, we find
\begin{equation}
\label{eqn:g}
f^{-1}-1+\rmd h_t(R)=0.
\end{equation}
The condition that the period of $fR$ be $1$ translates into $f^{-1}$
integrating to $1$ along any fibre of the $S^1$-bundle. This allows us
to define a family of functions $h_t$ satisfying (\ref{eqn:g}),
and then there is a unique vector field $Y_t\in\ker\alpha_t$
satisfying~(\ref{eqn:Y}). Both $\alpha_1-\alpha_0+\rmd h_t$ and
$\rmd\alpha_t$ are lifts of differential forms on the quotient
surface $M/S^1$, hence the flow of $Y_t$ preserves fibres.
\end{proof}

\begin{rem}
If $R$ is the Reeb vector field of a contact form~$\alpha$
(on a connected manifold~$M$), then
the rescaled vector field $f^{-1}R$ is never the Reeb vector field of
$f\alpha$, unless the function~$f$ is constant, for the identity
\[ 0=i_R\rmd(f\alpha)=i_R(\rmd f\wedge\alpha)=
\rmd f(R)\alpha-\rmd f\]
implies that $\rmd f$ vanishes on all vectors
tangent to the contact structure $\ker\alpha$,
and by \cite[Theorem~3.3.1]{geig08} any two points in $M$ can be joined
by a curve tangent to the contact structure.
\end{rem}

\begin{ack}
Special thanks go to Claude Viterbo for putting this question to me, and
to Alberto Abbondandolo for showing me identity~(\ref{eqn:alberto}).
I first learned about basic cohomology from a talk by Robert Wolak.
I thank Peter Albers, Jes\'us Gonzalo and Kai Zehmisch
for their collaboration on various projects that have
contributed in many ways to the conception of this note,
and Christian Lange for useful conversations on $S^1$-orbibundles.
I also thank the referee for perceptive and detailed comments on the
original manuscript.
\end{ack}

\end{document}